\documentclass{conm-p-l}

\usepackage{amsthm,amsfonts, cite}
\usepackage{amssymb,graphicx,color, tikz-cd, tikz,float}
\usepackage[all]{xy}
\usepackage{eucal}
\usepackage{enumerate}
\usepackage{hyperref}
\numberwithin{equation}{section}

\theoremstyle{plain}
\newtheorem{teo}{Theorem}[section]
\newtheorem*{teo*}{Theorem}
\newtheorem{teoA}{Theorem}

\newtheorem{cor}[teo]{Corollary}
\newtheorem*{cor*}{Corollary}
\newtheorem{lem}[teo]{Lemma}
\newtheorem*{lem*}{Lemma}
\newtheorem{prop}[teo]{Proposition}
\newtheorem*{prop*}{Proposition}

\theoremstyle{remark}
\newtheorem{obs}[teo]{Remark}
\newtheorem{ex}[teo]{Example}

\newcommand{\R}{\ensuremath{{\mathbb{R}}}}
\newcommand{\C}{\ensuremath{{\mathbb{C}}}}

\newcommand{\g}{\ensuremath{\mathrm{g}}}

\renewcommand\vec[1]{\boldsymbol{#1}}

\newcommand\Y{\ensuremath{\mathbf{Y}}}

\makeatletter
\@namedef{subjclassname@2020}{\textup{2020} Mathematics Subject Classification}
\makeatother

\begin{document}

\title[The curvature of left-invariant magnetic systems]{The curvature of left-invariant magnetic systems}

\author[S. Rankin]{Shane Rankin}
\author[I. Terek]{Ivo Terek}
\author[D. Weed]{David Weed}

\address{\parbox{\linewidth}{Department of Mathematics \\ University of California, Riverside \\ Riverside, CA 92521, USA \\[-.5em] }}

\email{shane.rankin@email.ucr.edu}
\email{ivo.terek@ucr.edu}
\email{david.weed@email.ucr.edu}

\makeatletter
\@namedef{subjclassname@2020}{\textup{2020} Mathematics Subject Classification}
\makeatother

\keywords{magnetic systems $\cdot$ magnetic curvature $\cdot$ Lie groups}
\subjclass[2020]{Primary: 53C15. Secondary: 22E15 $\cdot$ 53C30.}

\begin{abstract}
We compute the magnetic curvatures (in the sense of Assenza) of left-invariant magnetic systems on Lie groups and explore relations between curvature properties and algebraic properties, extending to the magnetic setting a number of results from Milnor's classic 1976 paper on the geometry of left-invariant metrics. We also discuss the existence of non-trivial bi-invariant magnetic systems and exhibit the resulting curvature formulas in some concrete examples, including the Heisenberg group.
\end{abstract}

\maketitle

\section{Introduction}

A \emph{magnetic system} on a smooth manifold $M$ is defined, as usual, to be a pair $(\g,\sigma)$, where $\g$ is a Riemannian metric and $\sigma$ is a closed $2$-form on $M$; in this context, $\sigma$ is called the \emph{magnetic $2$-form}. One may then consider the second-order ordinary differential equation
\begin{equation}\label{eqn:Landau-Hall}
  \frac{{\rm D}\dot{\gamma}}{{\rm d}t}(t) = \Y_{\gamma(t)}(\dot{\gamma}(t)),
\end{equation}for smooth curves $\gamma\colon I\to M$, where the covariant derivative ${\rm D}/{\rm d}t$ is induced by the Levi-Civita connection of $\g$, and the \emph{Lorentz force operator} $\Y\colon TM\to TM$, skew-adjoint and equivalent to $\sigma$ under $\g$, is characterized by
\begin{equation}\label{eqn:Lorentz_force}
  \sigma_x(v,w) = \g_x(\Y_x(v),w),\quad\mbox{for all $x\in M$ and $v,w\in T_xM$}.
\end{equation}Equation \eqref{eqn:Landau-Hall} is sometimes called the \emph{Landau-Hall equation}, and its solutions are called $(\g,\sigma)$-geodesics; they all have constant speed due to skew-adjointness of $\Y$, and $(\g,\sigma)$ is called \emph{complete} when all inextendible $(\g,\sigma)$-geodesics are defined on all of $\R$. In this case, the magnetic flow obtained from \eqref{eqn:Landau-Hall} may be restricted to each energy level $\Sigma_s = \{(x,v) \in TM :  \|v\|_x = s\}$, where $s>0$. The dynamical and geometric properties of such magnetic flows, that now strongly depend on the value of $s$ (unlike in the classical Riemannian case, where $\sigma=0$), have been---and continue to be---largely investigated. See for instance \cite{Burns-Paternain_2002, Merry_2010, Wojtkowski_2000,BT98,Gouda-97} and the references therein, or \cite{AMMP_2017, ABM_2025, AB_2021, BK_2022, Maier_2026} for more recent work.

With the recent introduction by Assenza \cite{Assenza_2024} of a \emph{magnetic curvature operator}, and the companion notions of \emph{magnetic sectional and Ricci curvatures}, many results in Riemannian geometry have been generalized to the setting of magnetic systems. For example, we have the Bonnet-Myers and Synge's theorems in \cite{Assenza_2024}, Green and Hopf's theorems in \cite{ART_2024}, compatibility equations from submanifold theory have been listed in \cite{AGAG-26}, and an interpretation of magnetic curvature as the curvature of a certain Cartan connection is given in the recent preprint \cite{beaufort2026tensortomographyframeflow}. We review the precise definitions of magnetic curvature, all of which explicitly carry the parameter $s>0$ from the previous paragraph, in Section \ref{sec:mag-curvature-defn}.

Here, we consider the simplest prototypes of homogeneous spaces: Lie groups. Naturally, a magnetic system on a Lie group is \emph{left-invariant} if all left-translations are \emph{magnetomorphisms} (that is, diffeomorphisms preserving both the metric and magnetic $2$-form, cf. \cite{ABM_2025}). Explicit descriptions of magnetic geodesics have been obtained in Heisenberg groups \cite{EGM_2021, MN_2022}, and in three-dimensional groups with bi-invariant metrics \cite{Turhan_2020}, while integrability of magnetic flows (both in the Liouville and non-commutative senses) on Lie groups and homogeneous spaces has been thoroughly studied in \cite{Efimov_2005, BJ_2008, MSY_2008, DGJ_2025}. Our goal here is to study curvature. Following in spirit parts of the presentation by Milnor \cite{Milnor_1976}, we register formulas for the magnetic curvatures of left-invariant magnetic systems in Section \ref{sec:inv-mag-curv}, and discuss some relations between curvature properties and algebraic ones.

Our main results follow the general trend of many rigidity-related theorems for magnetic systems, where the magnetic $2$-form ultimately vanishes; see e.g. \cite[Lemma A.7]{Paternain_2007}, \cite[Corollary 1.6]{mmls}, \cite[Theorem A and Corollary 3.6]{SIGMA-26}, and \cite[Theorem D]{ART_2024}. The first one we generalize is a well-known consequence of \cite[formula (7.3)]{Milnor_1976}, about the sectional curvature of a bi-invariant metric:
\begin{teoA}\label{thm:bi-inv-pos-sec}
  For any Lie group $G$ and bi-invariant magnetic system $(\g,\sigma)$ on $G$, and any $s>0$, it holds that ${\rm Sec}^{(\g,\sigma,s)} \geq 0$. Moreover, ${\rm Sec}^{(\g,\sigma,s)} = 0$ if and only if $G$ is Abelian and $\sigma=0$.
\end{teoA}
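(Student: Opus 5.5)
The plan is to compute ${\rm Sec}^{(\g,\sigma,s)}$ explicitly on left-invariant fields, using the definitions reviewed in Section~\ref{sec:mag-curvature-defn}. Bi-invariance should kill the only term of indefinite sign, leaving a sum of manifestly nonnegative terms. By homogeneity it suffices to work at the identity, with $v,w\in\mathfrak g$, $\|v\|=s$, and $w\perp v$ unit.

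First I will record the algebraic consequences of bi-invariance. Since $\g$ is bi-invariant, each ${\rm ad}_X$ is skew-adjoint and the Levi-Civita connection on left-invariant fields is $\nabla_XZ=\tfrac12[X,Z]$. Since $\sigma$ is also right-invariant, it is ${\rm Ad}$-invariant. Differentiating this gives $\sigma([X,Z],W)+\sigma(Z,[X,W])=0$, which under \eqref{eqn:Lorentz_force} (with $\g$ also ${\rm ad}$-invariant) says exactly that $\Y\circ{\rm ad}_X={\rm ad}_X\circ\Y$ for all $X\in\mathfrak g$.

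Next I will show the derivative term vanishes. Because $\Y$ is a left-invariant tensor, for left-invariant $X,Z$ we have
\[(\nabla_X\Y)Z=\nabla_X(\Y Z)-\Y(\nabla_XZ)=\tfrac12\bigl([X,\Y Z]-\Y[X,Z]\bigr)=\tfrac12[{\rm ad}_X,\Y]Z=0.\]
So $\nabla\Y=0$, and every term of the magnetic curvature involving $\nabla\Y$ drops out.

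What remains in Assenza's magnetic sectional curvature is $s^2\,{\rm Sec}(v,w)$ plus terms quadratic in $\Y$. These are (up to the precise normalization in Section~\ref{sec:mag-curvature-defn}) a positive multiple of $\g(\Y v,w)^2$ and a positive multiple of $\|\Y w-\g(\Y w,\hat v)\hat v\|^2$ or a similar squared norm, where $\hat v = v/s$. Milnor's formula gives ${\rm Sec}(v,w)=\tfrac14\|[v,w]\|^2/s^2\ge0$ for bi-invariant metrics. Hence ${\rm Sec}^{(\g,\sigma,s)}\ge0$.

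For the rigidity, suppose ${\rm Sec}^{(\g,\sigma,s)}\equiv0$. Then each nonnegative summand vanishes for all admissible $v,w$. The Riemannian term gives $[v,w]=0$ for all orthogonal pairs, hence for all pairs, so $\mathfrak g$ is abelian and $G$ is Abelian. The $\g(\Y v,w)^2$ term gives $\g(\Y v,w)=0$ for all $w\perp v$. Since $\Y v\perp v$ by skewness, this forces $\Y v=0$ for all $v$, i.e.\ $\sigma=0$. The converse is immediate, since then both the Levi-Civita curvature and all $\Y$ terms vanish.

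The main obstacle is making sure the exact form of the $\Y$-quadratic terms in Assenza's definition is really nonnegative rather than having a negative coefficient. If some term like $-\|\Y w\|^2$ appears, I would combine it with the others via the skew-adjointness identity $\|\Y w\|^2\ge\g(\Y w,\hat v)^2=\g(\Y v,w)^2/s^2$ to regroup everything into squares. Failing that, I would diagonalize $\Y$ into $2\times2$ rotation blocks and check the sign block by block.
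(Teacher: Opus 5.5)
Your argument is correct and is essentially the paper's alternative route: bi-invariance gives $[{\rm ad}_X,\Y]=0$ (Proposition \ref{prop:bi-inv-char}), hence $\nabla\Y=0$, and \eqref{eqn:mag-sec-quadratic} collapses to \eqref{eqn:mag-sec-bi-inv}, from which nonnegativity and the rigidity statement are read off term by term. The paper's main route instead specializes the completed-square formula of Proposition \ref{prop:sec-central}. The obstacle you flag does not arise: with $v,w$ orthonormal and $s$ kept as a separate parameter (rather than $\|v\|=s$), \eqref{eqn:Aww} gives exactly $\frac34\langle \Y(v),w\rangle^2+\frac14\|\Y(w)\|^2$. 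This equals your projected form with coefficients $1$ and $\frac14$, so both terms are nonnegative and no regrouping is needed.
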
 
As in \cite{Milnor_1976}, Theorem \ref{thm:bi-inv-pos-sec} follows from a more general computation---summarized in Proposition \ref{prop:sec-central}---of the $s$-magnetic sectional curvature along directions which are ``compatible'' with $(\g,\sigma)$, in the sense of Proposition \ref{prop:bi-inv-char}.

Next, we have a magnetic version of \cite[Theorem 1.5]{Milnor_1976}, which specializes \cite[Theorem D]{ART_2024} to the left-invariant setting:
\begin{teoA}\label{teo:Milnor-Flat}
  Let $(\g,\sigma)$ be a left-invariant magnetic system on a connected Lie group $G$. Then, there is $s>0$ such that ${\rm Sec}^{(\g,\sigma,s)}=0$ if and only if $\nabla\sigma=0$ and one of the following occurs:
  \begin{enumerate}[\normalfont(i)]
  \item $\sigma=0$ and the Lie algebra of $G$ splits as an orthogonal direct sum $\mathfrak{g} = \mathfrak{b}\oplus \mathfrak{u}$, where $\mathfrak{b}$ is a commutative subalgebra, $\mathfrak{u}$ is a commutative ideal, and ${\rm ad}_b$ is skew-adjoint for every $b\in\mathfrak{b}$.
  \item $\sigma$ is nowhere-vanishing and $J=\|\Y\|^{-1}\Y$ makes $(G,\g)$ a K\"{a}hler manifold holomorphically isometric to a complex hyperbolic space $\C\mathbb{H}^n$.
  \end{enumerate}
\end{teoA}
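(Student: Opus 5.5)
The plan is to obtain this as the left-invariant specialization of \cite[Theorem D]{ART_2024}, combined with Milnor's classification \cite[Theorem 1.5]{Milnor_1976}. As we recall it, \cite[Theorem D]{ART_2024} says that if ${\rm Sec}^{(\g,\sigma,s)}=0$ for some $s>0$, then $\nabla\sigma=0$ and one of two things happens. Either $\sigma=0$ and $\g$ is flat, or $\sigma$ is nowhere-vanishing, $J=\|\Y\|^{-1}\Y$ is a parallel complex structure, $\|\Y\|$ is constant, and $(M,\g,J)$ has constant negative holomorphic sectional curvature determined by $\|\Y\|$ and $s$. Conversely, these configurations have vanishing $s$-magnetic sectional curvature for the appropriate $s$.

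For the forward direction we assume ${\rm Sec}^{(\g,\sigma,s)}=0$. Before quoting the pointwise dichotomy, we check its hypotheses: left-invariance makes $(G,\g)$ complete and homogeneous, and $\|\Y\|$ is constant. In particular $\sigma$ either vanishes identically or nowhere, so no case mixing occurs.
\begin{enumerate}[(a)]
\item If $\sigma=0$, the magnetic sectional curvature reduces to the Riemannian one. A left-invariant flat metric is then classified by \cite[Theorem 1.5]{Milnor_1976}, which gives exactly the splitting $\mathfrak{g}=\mathfrak{b}\oplus\mathfrak{u}$ in (i).
\item If $\sigma\neq 0$, the $s$-magnetic formulas from Section \ref{sec:inv-mag-curv}, together with $\nabla\sigma=0$, give that $(G,\g,J)$ is Kähler with constant negative holomorphic sectional curvature. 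It is complete because it is homogeneous. It is simply connected because a complete Kähler manifold of constant negative holomorphic curvature is a quotient of $\C\mathbb{H}^n$ by a group acting freely. A left-invariant metric on $G$ has a transitive isometry group, so the quotient is homogeneous, and homogeneous quotients of a negatively curved Hadamard manifold are trivial: a nontrivial deck transformation would have constant positive displacement, which is impossible for hyperbolic isometries. Also $G$ is diffeomorphic to its universal cover, being aspherical and homogeneous. This gives (ii).
\end{enumerate}

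For the converse, we check each case.
\begin{enumerate}[(a)]
\item In case (i), $\sigma=0$ and Milnor's theorem gives flatness, so ${\rm Sec}^{(\g,0,s)}=0$ for every $s$.
\item In case (ii), $\Y=cJ$ with $c$ constant and $J$ parallel. We compute the magnetic curvature on $\C\mathbb{H}^n$ of holomorphic sectional curvature $-4\kappa$. The curvature terms involving $\nabla\Y$ vanish, and the remaining terms are the Riemannian curvature plus terms in $\Y^2/s^2$ and the $\Y$-component of the plane. Solving for $s$ in terms of $c$ and $\kappa$ should give a unique $s>0$ making everything cancel. This is the same calculation as in \cite{ART_2024}, so we would simply cite it.
\end{enumerate}

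The main obstacle is the global step in case (ii). We need to identify $(G,\g)$ itself, and not merely its universal cover, with $\C\mathbb{H}^n$, and also to confirm that $\nabla\sigma=0$ is part of the conclusion rather than an extra hypothesis. For the first point we would rely on homogeneity as above. For the second we would quote \cite[Theorem D]{ART_2024}, or else derive it directly: polarize ${\rm Sec}=0$ in the left-invariant formula and extract the $\nabla\Y$ terms, which are odd in $s$, separately from the even ones.
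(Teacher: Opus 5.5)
Your proposal is correct, but in case (ii) of the forward direction it takes a different route from the paper. The paper extracts only one thing from \cite[Theorem D]{ART_2024} and \eqref{eqn:neg-hol}: that $\g$ has strictly negative sectional curvature. It then quotes Heintze's theorem, which says a connected homogeneous K\"ahler manifold of negative curvature is holomorphically isometric to $\C\mathbb{H}^n$. That single citation handles both the local identification and the global (simple connectivity) issue at once. You instead use the full conclusion of Theorem D, namely constant negative holomorphic sectional curvature. You combine completeness from homogeneity with the classical uniqueness of complete simply connected complex space forms, and supply simple connectivity by hand through a displacement argument. That argument is essentially the Kobayashi--Wolf theorem specialized to this situation. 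Your route is more elementary and makes the global step explicit. The paper's route is shorter and needs no constancy of holomorphic curvature, and Heintze's paper is reused anyway in the proof of Theorem \ref{teo:solvable}. The converse directions agree: Milnor's theorem in case (i), and the curvature formula from \cite{ART_2024} in case (ii).

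Two steps of your sketch deserve one line each.
\begin{enumerate}[(a)]
\item Constancy of the displacement: $\pi_1(G)$ is a discrete central subgroup of the universal cover $\widetilde{G}$. So deck transformations commute with left translations of $\widetilde{G}$, which are isometries of the lifted metric, and the displacement is constant.
\item Impossibility of such a deck transformation: if an isometry of a Hadamard manifold has constant displacement $c>0$, every point lies on an axis. Distinct axes are then parallel geodesics bounding a flat strip, contradicting $K<0$. This needs $\dim G\geq 2$, which is automatic when $\sigma\neq 0$.
\end{enumerate}

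Finally, delete the sentence claiming that $G$ is diffeomorphic to its universal cover because it is aspherical and homogeneous. It is false in general (a torus is a counterexample), and it is not needed once the deck group has been shown to be trivial.
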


The main ingredient in the proof of Theorem \ref{teo:Milnor-Flat}, perhaps hinted at by item (ii) therein, is a classical result by Heintze: every connected homogeneous K\"{a}hler manifold of negative curvature is holomorphically isometric to a complex hyperbolic space \cite[Theorem 4]{Heintze_1974}.

Finally, we also obtain a magnetic version of \cite[Theorem 1.6]{Milnor_1976}:
\begin{teoA}\label{teo:solvable}
  Let $G$ be a connected Lie group, and $(\g,\sigma)$ be a left-invariant magnetic system on $G$. If there is $s>0$ such that ${\rm Sec}^{(\g,\sigma,s)} = 0$, then $G$ is solvable. If, in addition, $G$ is unimodular, then $\sigma=0$ and $\g$ is flat.
\end{teoA}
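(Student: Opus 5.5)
We plan to deduce Theorem \ref{teo:solvable} directly from Theorem \ref{teo:Milnor-Flat}. Assume ${\rm Sec}^{(\g,\sigma,s)}=0$ for some $s>0$. By Theorem \ref{teo:Milnor-Flat}, $\nabla\sigma=0$ and one of the two alternatives holds; we treat them separately.

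\emph{Case (i).} Here $\sigma=0$ and $\mathfrak{g}=\mathfrak{b}\oplus\mathfrak{u}$ with $\mathfrak{b}$ a commutative subalgebra and $\mathfrak{u}$ a commutative ideal. Since $\mathfrak{u}$ is an ideal and $\mathfrak{g}/\mathfrak{u}\cong\mathfrak{b}$ is abelian, we get $[\mathfrak{g},\mathfrak{g}]\subseteq\mathfrak{u}$. Hence $[[\mathfrak{g},\mathfrak{g}],[\mathfrak{g},\mathfrak{g}]]\subseteq[\mathfrak{u},\mathfrak{u}]=0$, so $\mathfrak{g}$ is $2$-step solvable, and $G$ is solvable because it is connected. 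Flatness of $\g$ comes for free, since ${\rm Sec}^{(\g,0,s)}$ reduces to the Riemannian sectional curvature (equivalently, this is Milnor's Theorem 1.5).

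\emph{Case (ii).} Here $(G,\g)$ is isometric to $\C\mathbb{H}^n$ with $n\geq 1$, since $\sigma\neq 0$ forces $\dim G\geq 2$. We then use the standard fact that a connected Lie group acting simply transitively by isometries on a Riemannian manifold of strictly negative sectional curvature is solvable; Heintze proves this in the same paper, and it also follows from the Iwasawa decomposition of ${\rm SU}(n,1)$. More concretely, $G$ embeds as a closed subgroup of ${\rm Isom}(\C\mathbb{H}^n)$ acting simply transitively. A simply transitive subgroup of a rank-one semisimple group is solvable: a nontrivial Levi factor would be semisimple and act with compact isotropy, which contradicts freeness plus transitivity in negative curvature. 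If this argument is not airtight, we will cite Heintze's theorem that homogeneous negatively curved manifolds are solvmanifolds with a simply transitive solvable group. Either way $G$ is solvable.

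\emph{Unimodular addendum.} Suppose in addition that $G$ is unimodular. We must exclude case (ii). By a result of Heintze, or by the Azencott--Wilson theory, a Lie group with a left-invariant metric of strictly negative sectional curvature is non-unimodular: its Lie algebra has the form $\mathfrak{a}\ltimes\mathfrak{n}$ with ${\rm ad}$ of a suitable element having eigenvalues of positive real part, so ${\rm tr}\,{\rm ad}\neq 0$. A self-contained alternative is Milnor's observation that unimodular metric Lie algebras have some nonnegative Ricci or sectional curvature direction. Concretely, for unimodular $\mathfrak{g}$ with left-invariant metric, Milnor shows that either $\mathfrak{g}$ is commutative or some sectional curvature is $\geq 0$, e.g. ${\rm Sec}(x,y)\ge 0$ for $x$ central in a suitable sense. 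This contradicts the strictly negative curvature of $\C\mathbb{H}^n$. Therefore case (i) holds, so $\sigma=0$ and $\g$ is flat.

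\emph{Main obstacle.} The main obstacle is case (ii): both the solvability of $G$ and the non-unimodularity require an external structural result about simply transitive isometry groups of $\C\mathbb{H}^n$. The cleanest route is to invoke Heintze \cite{Heintze_1974}, which shows that any such group is solvable and of the form $\mathfrak{a}\ltimes\mathfrak{n}$ with ${\rm tr}\,{\rm ad}_a>0$, hence non-unimodular.
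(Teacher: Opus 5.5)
Your proposal is correct, and for solvability it follows the paper. In case (i) you correctly read off from Milnor's flat splitting $\mathfrak{g}=\mathfrak{b}\oplus\mathfrak{u}$ that $\mathfrak{g}$ is $2$-step solvable. In case (ii) the key input is, as in the paper, Heintze's remark that a Lie group with a left-invariant metric of negative sectional curvature is solvable. The paper applies it to $\widetilde{G}$ and passes to the quotient $G$. You route through Theorem \ref{teo:Milnor-Flat}, although only ${\rm Sec}^\g<0$ is actually used.

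The genuine difference is in the unimodular clause. Once solvability is known, you exclude case (ii) with Heintze's structure theorem for solvable groups with negatively curved left-invariant metrics: $[\mathfrak{g},\mathfrak{g}]$ has codimension one and some $a$ acts on it with eigenvalues of positive real part, so ${\rm tr}\,{\rm ad}_a>0$. The paper instead uses only ${\rm Ric}^\g<0$, from \eqref{eqn:neg-hol}, together with Dotti's theorem that no solvable unimodular group carries a left-invariant metric of negative Ricci curvature. Your route keeps all external input inside Heintze's paper and gives an explicit algebraic obstruction, but it uses the full strength of negative sectional curvature. The paper's route needs only a sign on the Ricci curvature, at the price of citing a second, independent theorem.

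Do not lean on your hedges, though.
\begin{enumerate}[(a)]
\item The fact that a negatively curved homogeneous manifold is isometric to \emph{some} solvable group with a left-invariant metric does not make \emph{your} $G$ solvable. You need the statement about every simply transitive group, as in your first sentence.
\item Your Levi-factor sketch never explains why a semisimple factor cannot act freely. The reason is that a nontrivial semisimple subgroup of ${\rm Isom}(\C\mathbb{H}^n)$ contains a nontrivial compact subgroup, which fixes a point by Cartan's fixed point theorem.
\item Your ``self-contained'' Milnor alternative is not self-contained. The unimodular algebra with $[z,x]=x$, $[z,y]=-y$, $[x,y]=0$ has trivial center and, for every inner product, no nonzero element with skew-adjoint ${\rm ad}$. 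So no lemma about central-type elements produces a plane with ${\rm Sec}^\g\geq 0$. The statement you are reaching for is Milnor's Theorem 1.6 itself: nonpositive curvature, together with its unimodular clause. Applied to $\g$ in case (ii), it would in fact give both conclusions at once.
\end{enumerate}
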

The main assumption in Theorem \ref{teo:solvable} is much more restrictive than the one in \cite[Theorem 1.6]{Milnor_1976}, which allows the sectional curvature to be nonpositive. We conjecture, however, that Theorem \ref{teo:solvable} remains true if one instead assumes that ${\rm Sec}^{(\g,\sigma,s)} \leq 0$. The difficulties with a possible proof seem to ultimately stem from the general failure of the Hopf-Rinow theorem in the magnetic setting \cite[Remark 2.5(b)]{BRCF_2005}, causing a crucial step in the proof of the corresponding would-be magnetic Cartan-Hadamard theorem to fail. Without it, it is not clear how to trace out the extensive work of Wolf \cite{Wolf_1964}, Azencott and Wilson \cite{AZ1,AZ2}, Kobayashi \cite{Kobayashi_1962}, and others in order to obtain a classification of negatively curved homogeneous magnetic systems.

\medskip

\noindent {\bf Organization of the paper.} Throughout the entire text, unless otherwise stated, we work in the smooth category, $G$ denotes a connected Lie group, and $\mathfrak{g}$ is its Lie algebra. In Sections \ref{sec:mag-curvature-defn} and \ref{sec:Riem-Lie-gps} we review basic facts about magnetic curvature and left-invariant metrics. In Section \ref{sec:inv-mag-curv} we put such facts together and register in Proposition \ref{prop:left-invariant-mag-sec} an explicit formula for the $s$-magnetic sectional curvature of left-invariant magnetic systems; a few consequences---including Theorem \ref{thm:bi-inv-pos-sec}---are then obtained. Next, in Section \ref{sec:non-triv-bi} we discuss the existence of nontrivial bi-invariant magnetic systems, and in Section \ref{sec:left-inv-flatness} we prove Theorems \ref{teo:Milnor-Flat} and \ref{teo:solvable}. Finally, in Section \ref{sec:Heisenberg} we compute the magnetic curvatures of a special class of systems on Heisenberg groups. Appendix \ref{app:EA} derives a generalization of the well-known Euler-Arnold formalism, for left-invariant exact magnetic systems.

\section{Magnetic curvature}\label{sec:mag-curvature-defn}

We briefly review, for the reader's convenience, the definitions of magnetic curvature presented by Assenza \cite{Assenza_2024}. See also \cite[pp. 2--3]{ART_2024}, \cite[p. 2]{SIGMA-26}, and \cite[p. 4]{AGAG-26}.

Let $(\g,\sigma)$ be a magnetic system on a smooth manifold $M$, and let $UM \to M$ be the unit-tangent bundle of $(M,\g)$. We consider two auxiliary bundles: the Stiefel bundle ${\rm St}_2(M,\g)\to M$ of orthonormal $2$-frames tangent to $M$, and the bundle $E\to UM$ of orthogonal hyperplanes tangent to $M$; specifically, their fibers are given by ${\rm St}_2(M,\g)_x = \{(v,w)\in (U_xM)^{\times 2} : \g_x(v,w)=0\}$ and $E_{(x,v)} = v^\perp$.

For each $s>0$, the \emph{$s$-magnetic curvature operator} of $(\g,\sigma)$ is the self-adjoint endomorphism $\vec{M}^{(\g,\sigma,s)}\colon E\to E$ given by
\begin{equation}
  \vec{M}^{(\g,\sigma,s)}_{(x,v)}(w) = R^{(\g,\sigma,s)}_{(x,v)}(w) + A^{(\g,\sigma)}_{(x,v)}(w),
\end{equation}where $R^{(\g,\sigma,s)},A^{(\g,\sigma)}\colon E\to E$ are in turn defined as
\begin{equation}\label{eqn:A-R-defs}
  \begin{split}
    A^{(\g,\sigma)}_{(x,v)}(w) &= -\frac{3}{4} \Y_x(\Y_x(w)^\top) -\frac{1}{4} \Y_x^2(w)^\perp,\quad\mbox{and} \\
    R^{(\g,\sigma,s)}_{(x,v)}(w) &= s^2 R^\g_x(w,v)v - s (\nabla_w\Y)(v) + \frac{s}{2} (\nabla_v\Y)(w)^\perp ,
  \end{split}
\end{equation}where $R^\g$ is the Riemann curvature tensor of $\g$, and the tangent and perpendicular projections are taken relative to the orthogonal splitting $T_xM = \R v \oplus v^\perp$ induced by $\g$. The general formulas
  \begin{equation}\label{eqn:Aww}
    \begin{split}
      \g_x(A^{(\g,\sigma)}_{(x,v)}(w),w) &= \frac{3}{4} \g_x(\Y_x(v),w)^2 + \frac{1}{4} \|\Y_x(w)\|^2  \\ \mbox{and}\quad {\rm tr}\big(A^{(\g,\sigma)}_{(x,v)}\big) &= \frac{1}{2} \|\Y_x(v)\|^2 + \frac{1}{4} \|\Y_x\|^2,\quad \mbox{where } \|\Y_x\| = \sqrt{-{\rm tr}(\Y_x^2)},
    \end{split}
  \end{equation}
clear from \eqref{eqn:A-R-defs}, will be useful in Sections \ref{sec:inv-mag-curv} and \ref{sec:Heisenberg}. The \emph{$s$-magnetic sectional curvature} ${\rm Sec}^{(\g,\sigma,s)}\colon {\rm St}_2(M,\g) \to \R$ and \emph{\mbox{$s$-magnetic} Ricci curvature} ${\rm Ric}^{(\g,\sigma,s)}\colon UM\to \R$ are defined by
\begin{equation}
  {\rm Sec}^{(\g,\sigma,s)}_x(v,w) = \g_x\big(\vec{M}^{(\g,\sigma,s)}_{(x,v)}(w),w\big)\quad\mbox{and}\quad {\rm Ric}^{(\g,\sigma,s)}(x,v) = {\rm tr}\,\big(\vec{M}^{(\g,\sigma,s)}_{(x,v)}\big).
\end{equation}The $s$-magnetic sectional curvature does not survive as a function on the Grassmannian bundle of $2$-planes tangent to $M$ with,
\begin{equation}\label{eqn:sec-not-symmetric}\parbox{.53\textwidth}{in general, ${\rm Sec}^{(\g,\sigma,s)}_x(v,w) \neq {\rm Sec}^{(\g,\sigma,s)}_x(w,v)$.}\end{equation} As in the Riemannian case, however, the $s$-magnetic Ricci curvature may be expressed in terms of the $s$-magnetic sectional curvature, with
\begin{equation}\label{eqn:mag-Ric}
  {\rm Ric}^{(\g,\sigma,s)}(x,v) = \sum_{i=1}^{n-1} {\rm Sec}^{(\g,\sigma,s)}_x(v,e_i),
\end{equation}where $e_1,\ldots, e_{n-1}$ is any orthonormal basis of $v^\perp$. Some useful relations between the magnetic curvatures and the Riemannian ones are also readily obtained, e.g.
\begin{equation}\label{eqn:mag-sec-quadratic}
  {\rm Sec}^{(\g,\sigma,s)}_x(v,w) = s^2 {\rm Sec}^\g_x(v,w) - s\g_x((\nabla_w\Y)(v),w) + \g_x\big( (A^{(\g,\sigma)}_{(x,v)}(w),w\big),
\end{equation}cf. \cite[Formula (6.3)]{ART_2024}, and from there with \eqref{eqn:mag-Ric},
\begin{equation}\label{eqn:mag-Ric-div}
  {\rm Ric}^{(\g,\sigma,s)}(x,v) = s^2 {\rm Ric}^\g_x(v,v) - s\,({\rm div}_\g \sigma)_x(v) + {\rm tr}\big(A^{(\g,\sigma)}_{(x,v)}\big),
\end{equation}where ${\rm div}_{\g}\sigma \in \Omega^1(M)$ is given by $({\rm div}_\g\sigma)(Y) = {\rm tr}_{\g}((X,Z) \mapsto (\nabla_X\sigma)(Y,Z))$.

\section{Riemannian curvature of Lie groups}\label{sec:Riem-Lie-gps}

We present, for later reference and to establish notation, some general facts about left-invariant metrics on Lie groups. In particular, we refer to \cite[Proposition 3.18]{Cheeger-Ebin}, \cite[Theorem 7.30]{Besse}, or \cite[Theorem 5.3]{Arvanitoyeorgos} for more details on the formulas involving curvature.

Let $G$ be a connected Lie group, $\mathfrak{g}$ be its Lie algebra, and $\g=\langle\cdot,\cdot\rangle$ be a left-invariant metric on $G$. For all $x,y\in \mathfrak{g}$, we set ${\rm ad}_xy = [x,y]$, and denote by ${\rm ad}_x^\dagger$ the $\g$-adjoint of ${\rm ad}_x$, characterized by $\langle {\rm ad}_xy, z\rangle = \langle y, {\rm ad}_x^\dagger z\rangle$ for all $z\in \mathfrak{g}$. The Levi-Civita connection of $\g$ is given by the Koszul formula
\begin{equation}\label{eqn:LC-B}
  \nabla_xy = \frac{1}{2}[x,y] - B(x,y),\quad\mbox{where}\quad B(x,y) = \frac{1}{2} ({\rm ad}_x^\dagger y + {\rm ad}_y^\dagger x).
\end{equation}We also have that $\g$ is bi-invariant if and only if $B=0$, as a consequence of the easily established formula $-2\langle B(x,y),z\rangle = \langle x, {\rm ad}_zy\rangle + \langle {\rm ad}_zx, y\rangle$. Indeed, $\g$ is bi-invariant if and only if each ${\rm ad}_z$ is a skew-adjoint operator \cite[Lemma 7.2]{Milnor_1976}. Moreover, it directly follows from \eqref{eqn:LC-B} that, whenever $z,v\in \mathfrak{g}$ and ${\rm ad}_z$ is skew-adjoint, the relations
\begin{equation}\label{eqn:Koszul-ad-skew}
{\rm (i)}~  \nabla_zz = 0,\quad {\rm (ii)}~ \nabla_vz = -\frac{1}{2} {\rm ad}_v^\dagger z,\quad {\rm (iii)}~\nabla_zv = {\rm ad}_zv - \frac{1}{2} {\rm ad}_v^\dagger z
\end{equation}all hold. Finally, we have that
\begin{equation}\label{eqn:left-inv-sectional}
  \begin{split}
      {\rm Sec}^\g(v,w) = &-\frac{3}{4} \|[v,w]\|^2-\frac{1}{2} \langle [[v,w],w],v\rangle\\ &-\frac{1}{2} \langle [v,[v,w]],w\rangle - \langle B(v,v),B(w,w)\rangle + \|B(v,w)\|^2
  \end{split}
\end{equation}whenever $v,w\in \mathfrak{g}$ are orthonormal vectors; in particular, whenever $B=0$, \eqref{eqn:left-inv-sectional} reduces to ${\rm Sec}^\g(v,w) = \|[v,w]\|^2/4$.

\section{Left-invariant magnetic systems and sectional curvature}\label{sec:inv-mag-curv}

Let $G$ be a connected Lie group, and $\mathfrak{g}$ its Lie algebra. A left-invariant $2$-form $\sigma$ on $G$ is closed if and only if
\begin{equation}\label{closed-2-form}
  \sigma([u,v],w) + \sigma([v,w],u) + \sigma([w,u],v) = 0,\quad\mbox{for all }u,v,w\in \mathfrak{g}.
\end{equation}If $\sigma$ is exact, and $G$ is compact or has $H^2(\mathfrak{g},\R) = \{0\}$, we may choose a left-invariant $1$-form $\alpha$ on $G$ such that $\sigma = -{\rm d}\alpha$, in which case $\sigma(u,v) = \alpha([u,v])$ for all $u,v\in \mathfrak{g}$. If $\g=\langle\cdot,\cdot\rangle$ is a left-invariant metric on $G$, we will often start with a skew-adjoint operator $\Y$ on $\mathfrak{g}$, and \emph{define} $\sigma$ via \eqref{eqn:Lorentz_force} instead; it follows from \eqref{eqn:Lorentz_force} and \eqref{closed-2-form} that
\begin{equation}\label{Y-kills-[g,g]}
  \parbox{.64\textwidth}{if $[\mathfrak{g},\mathfrak{g}]\subseteq \ker \Y$, the corresponding $2$-form $\sigma$ is closed.}
\end{equation}
Next, a left-invariant covariant tensor field $\Theta$ is bi-invariant if and only if it is ${\rm Ad}$-invariant. This latter condition, as $G$ is connected, may be tested for infinitesimally: it holds if and only if\begin{equation}\label{ad-invariance}
  \sum_{i=1}^k \Theta(v_1,\ldots, v_{i-1}, {\rm ad}_zv_i, v_{i+1},\ldots, v_k) = 0,
\end{equation}for all $z,v_1,\ldots, v_k \in \mathfrak{g}$. It now follows that
\begin{equation}\label{eqn:bi-inv-iff-parallel}
  \parbox{.86\textwidth}{if the metric $\g$ is bi-invariant, $\Theta$ is bi-invariant if and only if $\Theta$ is parallel.}
\end{equation}
 Indeed, the left side of \eqref{ad-invariance} equals $-2(\nabla_z\Theta)(v_1,\ldots, v_k)$, due to \eqref{eqn:LC-B} and the definition of $\nabla_z\Theta$. Bi-invariant magnetic systems will be discussed in more detail in Section \ref{sec:non-triv-bi}, but we may already register a very useful characterization which follows from the above discussion.

\begin{prop}\label{prop:bi-inv-char}
 A left-invariant magnetic system $(\g,\sigma)$ on a connected Lie group $G$ is bi-invariant if and only if, for every $z\in \mathfrak{g}$, ${\rm ad}_z$ is skew-adjoint and commutes with $\Y$.
\end{prop}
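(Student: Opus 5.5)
The plan is to reduce the bi-invariance of the pair $(\g,\sigma)$ to the ${\rm Ad}$-invariance of its two left-invariant covariant tensors, and then test each one infinitesimally via \eqref{ad-invariance}. By definition, $(\g,\sigma)$ is bi-invariant exactly when right translations are also magnetomorphisms, that is, when both $\g$ and $\sigma$ are bi-invariant. Since $G$ is connected, each of these conditions is equivalent to \eqref{ad-invariance} holding for the corresponding tensor with $k=2$.

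For the metric, \eqref{ad-invariance} reads $\langle {\rm ad}_zu,v\rangle + \langle u,{\rm ad}_zv\rangle = 0$ for all $z,u,v\in\mathfrak{g}$. This says precisely that each ${\rm ad}_z$ is skew-adjoint, which also recovers \cite[Lemma 7.2]{Milnor_1976}.

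For the magnetic form, \eqref{ad-invariance} reads $\sigma({\rm ad}_zu,v)+\sigma(u,{\rm ad}_zv)=0$. Using \eqref{eqn:Lorentz_force}, this becomes $\langle \Y{\rm ad}_zu, v\rangle + \langle \Y u, {\rm ad}_zv\rangle = 0$ for all $u,v$. Once we know ${\rm ad}_z$ is skew-adjoint, the second term equals $-\langle {\rm ad}_z\Y u, v\rangle$. The condition is then $\langle (\Y{\rm ad}_z - {\rm ad}_z\Y)u, v\rangle = 0$ for all $u,v$, i.e.\ $[\Y,{\rm ad}_z]=0$.

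Combining the two computations gives both directions. If $(\g,\sigma)$ is bi-invariant, the metric condition yields skew-adjointness, and the $\sigma$ condition then yields commutation. Conversely, skew-adjointness gives bi-invariance of $\g$, and together with commutation the same identity run backwards gives \eqref{ad-invariance} for $\sigma$.

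Alternatively, one could route the argument through \eqref{eqn:bi-inv-iff-parallel}: with $\g$ bi-invariant, $\sigma$ is bi-invariant iff $\nabla\Y=0$. By \eqref{eqn:Koszul-ad-skew} with $B=0$ we have $\nabla_z=\frac12{\rm ad}_z$, so $\nabla_z\Y=\frac12[{\rm ad}_z,\Y]$, and the same criterion follows.

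There is no real obstacle here. The only point needing care is the order of the argument: the rewriting of the $\sigma$-condition as a commutator uses skew-adjointness of ${\rm ad}_z$, so the metric must be handled first, and the equivalence for $\sigma$ should be stated under that hypothesis.
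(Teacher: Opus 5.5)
Your proof is correct and is essentially the paper's. The paper uses Milnor's Lemma 7.2 for the metric, and then for $\sigma$ combines \eqref{eqn:bi-inv-iff-parallel} with $2\nabla_z\Y=[{\rm ad}_z,\Y]$, which is exactly your ``alternative'' paragraph. Your main route, writing out \eqref{ad-invariance} for $\sigma$ directly and turning it into $[\Y,{\rm ad}_z]=0$ via skew-adjointness, unwinds the same identity (the left side of \eqref{ad-invariance} equals $-2(\nabla_z\Theta)$ once $\g$ is bi-invariant) without naming the connection, and your care in handling the metric first is exactly right.
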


\begin{proof}
  We know from \cite[Lemma 7.2]{Milnor_1976} that $\g$ is bi-invariant if and only if for each $z\in \mathfrak{g}$, ${\rm ad}_z$ is skew-adjoint. In this case \eqref{eqn:LC-B} yields $2\nabla_z\Y = [{\rm ad}_z,\Y]$ and, noting that $\sigma$ is parallel if and only if $\Y$ is parallel, we may invoke \eqref{eqn:bi-inv-iff-parallel}.
\end{proof}

We now proceed to compute the magnetic curvature of a left-invariant magnetic system $(\g,\sigma)$. The curvatures will always be evaluated at vectors in $\mathfrak{g}$, and so we omit the identity base point $e\in G$, using the respective shorthand notations $\Y$, $A^{(\g,\sigma)}_v$, ${\rm Sec}^{(\g,\sigma,s)}(v,w)$, and ${\rm Ric}^{(\g,\sigma,s)}(v)$ for $\Y_e$, $A^{(\g,\sigma)}_{(e,v)}$, ${\rm Sec}^{(\g,\sigma,s)}(e,v)$, and ${\rm Ric}^{(\g,\sigma,s)}(e,v)$, where $v,w\in \mathfrak{g}$. First, we claim that
\begin{equation}\label{eqn:linear-term}
  \begin{split}\langle (\nabla_w\Y)(v),w\rangle  &= \langle [w,\Y(v)], w\rangle + \frac{1}{2} \langle [w,v],\Y(w)\rangle \\ &\quad - \frac{1}{2} \langle [v,\Y(w)], w\rangle - \frac{1}{2} \langle [w,\Y(w)],v\rangle
  \end{split}
  \end{equation}
  holds, for any $v,w\in \mathfrak{g}$. Indeed, by definition of $\nabla_w \Y$ and skew-adjointness of $\Y$, we have the relation $\langle (\nabla_w\Y)(v),w\rangle = \langle \nabla_w(\Y(v)), w\rangle + \langle \nabla_wv, \Y(w)\rangle$. Using \eqref{eqn:LC-B} and that $[w,w]=0$, it follows that $\langle \nabla_w(\Y(v)),w\rangle = \langle [w,\Y(v)], w\rangle$. A similar calculation again using \eqref{eqn:LC-B} shows that the second term $\langle \nabla_wv, \Y(w)\rangle$ amounts to the remaining three terms in \eqref{eqn:linear-term}. We register next the result of plugging \eqref{eqn:Aww}, \eqref{eqn:left-inv-sectional}, and \eqref{eqn:linear-term} into \eqref{eqn:mag-sec-quadratic}.

\begin{prop}\label{prop:left-invariant-mag-sec}
  Let $(\g,\sigma)$ be any left-invariant magnetic system on $G$, and $s>0$. Then, we have that
  \begin{equation}\label{eqn:left-invariant-mag-sec}
    \begin{split}
      {\rm Sec}^{(\g,\sigma,s)}(v,w)  &= s^2\Big(-\frac{3}{4} \|[v,w]\|^2-\frac{1}{2} \langle [[v,w],w],v\rangle  -\frac{1}{2} \langle [v,[v,w]],w\rangle   \\ &\quad - \langle B(v,v),B(w,w)\rangle + \|B(v,w)\|^2\Big)-  s\Big(\langle [w,\Y(v)],w\rangle  \\ &\quad + \frac{1}{2} \langle [w,v],\Y(w)\rangle -\frac{1}{2} \langle [v,\Y(w)],w\rangle-\frac{1}{2} \langle [w,\Y(w)],v\rangle\Big) \\ &\quad +\frac{3}{4} \langle w,\Y(v)\rangle^2 + \frac{1}{4} \|\Y(w)\|^2,
    \end{split}
  \end{equation}whenever $v,w\in\mathfrak{g}$ are orthonormal, for $B$ as in \eqref{eqn:LC-B}.
\end{prop}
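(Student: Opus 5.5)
The formula follows by substituting three previously established ingredients into the general identity \eqref{eqn:mag-sec-quadratic}. That identity reads ${\rm Sec}^{(\g,\sigma,s)}(v,w) = s^2{\rm Sec}^\g(v,w) - s\langle(\nabla_w\Y)(v),w\rangle + \langle A^{(\g,\sigma)}_v(w),w\rangle$. By left-invariance of $\g$, $\sigma$, and hence of $\Y$, $\nabla\Y$, $R^\g$ and $A^{(\g,\sigma)}$, it suffices to evaluate everything at the identity with $v,w\in\mathfrak{g}$ orthonormal. We then treat the three terms separately.

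For the quadratic term in $s$, we invoke \eqref{eqn:left-inv-sectional} directly, since $v,w$ are orthonormal. This produces the bracket multiplied by $s^2$, with $B$ as in \eqref{eqn:LC-B}.

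For the term independent of $s$, we use the first line of \eqref{eqn:Aww}, which gives $\frac34\langle \Y(v),w\rangle^2+\frac14\|\Y(w)\|^2$. One checks this quickly from \eqref{eqn:A-R-defs}. Skew-adjointness gives $\langle \Y(\Y(w)^\top),w\rangle = -\langle \Y(w)^\top,\Y(w)\rangle = -\langle \Y(w),v\rangle^2$. It also gives $\langle \Y^2(w)^\perp,w\rangle = \langle\Y^2(w),w\rangle = -\|\Y(w)\|^2$, because $w\perp v$. Finally $\langle w,\Y(v)\rangle^2=\langle \Y(w),v\rangle^2$.

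The only genuine computation is the linear term, namely establishing \eqref{eqn:linear-term}, and this is the step where sign errors are likely. I would proceed as follows.
\begin{enumerate}[(1)]
\item Write $(\nabla_w\Y)(v) = \nabla_w(\Y(v)) - \Y(\nabla_wv)$. Skew-adjointness of $\Y$ then gives $\langle(\nabla_w\Y)(v),w\rangle = \langle\nabla_w(\Y(v)),w\rangle + \langle\nabla_wv,\Y(w)\rangle$.
\item For the first piece, apply \eqref{eqn:LC-B}: $\langle\nabla_w u,w\rangle = \frac12\langle[w,u],w\rangle - \langle B(w,u),w\rangle$ with $u=\Y(v)$. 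Using $-2\langle B(x,y),z\rangle=\langle x,{\rm ad}_zy\rangle+\langle {\rm ad}_zx,y\rangle$ with $z=w$, we get $-\langle B(w,u),w\rangle = \frac12(\langle w,[w,u]\rangle + \langle [w,w],u\rangle) = \frac12\langle [w,u],w\rangle$. The total is $\langle [w,\Y(v)],w\rangle$.
\item For the second piece, set $z=\Y(w)$ and compute $\langle \nabla_wv, z\rangle = \frac12\langle[w,v],z\rangle + \frac12(\langle w,[z,v]\rangle + \langle [z,w],v\rangle)$. Rewriting $\langle w,[\Y(w),v]\rangle = -\langle [v,\Y(w)],w\rangle$ and $\langle[\Y(w),w],v\rangle = -\langle[w,\Y(w)],v\rangle$ yields exactly the remaining three terms of \eqref{eqn:linear-term}.
\end{enumerate}

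Multiplying this by $-s$ and adding the three contributions gives \eqref{eqn:left-invariant-mag-sec}. I expect the main obstacle to be only bookkeeping: keeping track of the signs coming from the $B$-identity and from the antisymmetry of the bracket in step (3).
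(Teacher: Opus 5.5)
Your proposal is correct and follows the paper's route exactly. You substitute \eqref{eqn:left-inv-sectional}, \eqref{eqn:Aww}, and \eqref{eqn:linear-term} into \eqref{eqn:mag-sec-quadratic}, splitting the linear term as $\langle\nabla_w(\Y(v)),w\rangle + \langle\nabla_w v,\Y(w)\rangle$ and evaluating each piece via \eqref{eqn:LC-B}. Your sign bookkeeping in steps (2) and (3) checks out, and it simply makes explicit what the paper calls ``a similar calculation.''
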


The next result generalizes \cite[Lemma 1.6]{Milnor_1976}, which states that the Riemannian sectional curvature of a left-invariant metric is nonnegative along planes containing directions $z$ with skew-adjoint ${\rm ad}_z$. The new assumption considered below, which means that the one-parameter subgroup generated by $z$ acts on $G$ via conjugations preserving \emph{both} $\g$ and $\sigma$, is of course motivated by Proposition \ref{prop:bi-inv-char}.

\begin{prop}\label{prop:sec-central}
  Let $z \in \mathfrak{g}$ be a unit vector such that ${\rm ad}_z$ is skew-adjoint and commutes with $\Y$. Then, ${\rm Sec}^{(\g,\sigma,s)}(v,z) \geq 0$ for every $s>0$ and unit vector $v\in \mathfrak{g}$ orthogonal to $z$, with equality if and only if $z$ is orthogonal to the image of ${\rm ad}_v - s^{-1}\Y$. In particular, this holds if $z$ is a central element.
\end{prop}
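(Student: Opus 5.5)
The plan is to specialize the general formula \eqref{eqn:left-invariant-mag-sec} of Proposition \ref{prop:left-invariant-mag-sec} to $w=z$, and then use the two hypotheses on $z$ to collapse everything into a sum of squares. Write $u = {\rm ad}_v^\dagger z$ throughout. The Riemannian part should reduce to Milnor's value $\tfrac14\|u\|^2$. To see this, first note that $B(z,z) = {\rm ad}_z^\dagger z = -[z,z] = 0$, so the term $\langle B(v,v),B(z,z)\rangle$ drops. Next, skew-adjointness of ${\rm ad}_z$ gives $B(v,z) = \tfrac12(u - [z,v])$.

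With these facts I rewrite the remaining Riemannian terms:
\begin{itemize}
\item $\langle [[v,z],z],v\rangle = -\|[v,z]\|^2$, again by skew-adjointness of ${\rm ad}_z$;
\item $\langle [v,[v,z]],z\rangle = \langle [v,z],u\rangle$, by definition of the adjoint;
\item $\|B(v,z)\|^2$ is expanded directly.
\end{itemize}
In the sum, the coefficients of $\|[v,z]\|^2$ are $-\tfrac34+\tfrac12+\tfrac14=0$, and the cross terms $\langle [v,z],u\rangle$ also cancel. What remains is $s^2\cdot\tfrac14\|u\|^2$.

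For the linear term $\langle(\nabla_z\Y)(v),z\rangle$ from \eqref{eqn:linear-term} with $w=z$, I check its four pieces in turn:
\begin{itemize}
\item $\langle [z,\Y(v)],z\rangle = 0$, since ${\rm ad}_z$ is skew.
\item $[z,\Y(z)] = \Y([z,z]) = 0$, since ${\rm ad}_z$ commutes with $\Y$.
\item $\langle [z,v],\Y(z)\rangle = -\langle v,[z,\Y(z)]\rangle = 0$.
\item $-\tfrac12\langle [v,\Y(z)],z\rangle = -\tfrac12\langle \Y(z),u\rangle$ is the only survivor.
\end{itemize}
This is the step where the commutation hypothesis is really used, and I expect it to be the main (though modest) obstacle: one has to see that every piece except the last dies.

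Collecting the terms, I obtain
\[
{\rm Sec}^{(\g,\sigma,s)}(v,z) = \frac{s^2}{4}\|u\|^2 + \frac{s}{2}\langle u,\Y(z)\rangle + \frac14\|\Y(z)\|^2 + \frac34\langle z,\Y(v)\rangle^2 = \frac{s^2}{4}\big\|{\rm ad}_v^\dagger z + s^{-1}\Y(z)\big\|^2 + \frac34\langle \Y(z),v\rangle^2,
\]
which is manifestly nonnegative.

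For the equality case, note that $({\rm ad}_v - s^{-1}\Y)^\dagger = {\rm ad}_v^\dagger + s^{-1}\Y$ because $\Y$ is skew-adjoint. Hence the first square vanishes exactly when $z \perp {\rm im}({\rm ad}_v - s^{-1}\Y)$. It remains to see that this condition already kills the second square. Pairing $({\rm ad}_v^\dagger + s^{-1}\Y)z = 0$ with $v$ gives $\langle z,[v,v]\rangle + s^{-1}\langle \Y(z),v\rangle = 0$, so $\langle \Y(z),v\rangle = 0$. Thus equality holds if and only if $z \perp {\rm im}({\rm ad}_v - s^{-1}\Y)$.

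Finally, if $z$ is central then ${\rm ad}_z = 0$, which is trivially skew-adjoint and commutes with $\Y$, so the central case is covered.
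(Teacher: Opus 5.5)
Your proposal is correct and follows the paper's proof essentially step by step. You reduce the Riemannian part to $\|{\rm ad}_v^\dagger z\|^2/4$ and observe that, under $[{\rm ad}_z,\Y]=0$, only the third term of \eqref{eqn:linear-term} survives. You then complete the square to get $\frac{s^2}{4}\|{\rm ad}_v^\dagger z + s^{-1}\Y(z)\|^2 + \frac{3}{4}\langle \Y(z),v\rangle^2$, and you settle the equality case via the adjoint of ${\rm ad}_v - s^{-1}\Y$ and by pairing with $v$. The only difference is bookkeeping: you expand $\|B(v,z)\|^2$ explicitly, whereas the paper uses that $z$ is orthogonal to the images of ${\rm ad}_z$ and $v\mapsto {\rm ad}_v^\dagger z$.
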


\begin{proof}
 Using that $z$ is orthogonal to the images of ${\rm ad}_z$ and $v\mapsto {\rm ad}^\dagger_v z$ whenever ${\rm ad}_z$ is skew-adjoint, a routine computation reduces \eqref{eqn:left-inv-sectional} to ${\rm Sec}^\g(v,z) = \|{\rm ad}_v^\dagger z\|^2/4$. Next, using also the condition that $[{\rm ad}_z,\Y]=0$, only the third term in \eqref{eqn:linear-term} remains, which we write as $\langle (\nabla_z\Y)(v),z\rangle = -\langle \Y(z), {\rm ad}_v^\dagger z\rangle/2$. Substituting all of it into \eqref{eqn:mag-sec-quadratic} and factoring it with \eqref{eqn:Aww}, we arrive at
  \begin{equation}
    {\rm Sec}^{(\g,\sigma,s)}(v,z) = \frac{s^2}{4} \| {\rm ad}_v^\dagger z + s^{-1}\Y(z)\|^2 + \frac{3}{4} \langle \Y(z),v\rangle^2 \geq 0,
  \end{equation}with equality if and only if $\,{\rm ad}_v^\dagger z + s^{-1}\Y(z) = 0$ and $\Y(z)$ is orthogonal to $v$. However, the latter condition is implied by the former, and the conclusion follows from the general relation $\langle {\rm ad}_v^\dagger z + s^{-1} \Y(z), w\rangle = \langle z, {\rm ad}_vw -s^{-1}\Y(w)\rangle$, valid for all $w\in \mathfrak{g}$.
\end{proof}

Theorem \ref{thm:bi-inv-pos-sec} now follows from Proposition \ref{prop:sec-central}, in view of Proposition \ref{prop:bi-inv-char} together with the easily verified claim that ${\rm ad}_v^\dagger z+s^{-1}\Y(z)$ vanishes for all $v,z\in \mathfrak{g}$ if and only if ${\rm ad}_v=0$ for all $v$ (i.e., $G$ is Abelian) and $\Y=0$ (hence $\sigma=0$). Alternatively, one may simplify \eqref{eqn:left-invariant-mag-sec} in the case where $(\g,\sigma)$ is bi-invariant to obtain
  \begin{equation}\label{eqn:mag-sec-bi-inv}
    {\rm Sec}^{(\g,\sigma,s)}(v,w) = \frac{1}{4}\left(s^2\|[v,w]\|^2  +3 \langle w,\Y(v)\rangle^2 + \|\Y(w)\|^2\right),
  \end{equation}for all orthonormal $v,w\in\mathfrak{g}$.
\begin{ex}
  The assumption that ${\rm ad}_z$ commutes with $\Y$ in Proposition \ref{prop:sec-central} cannot be dropped. Consider the Lie group $\mathrm{E}(2)$ of all rigid motions of the plane, equipped with the left-invariant metric $\g$ for which the basis $x,y,z$ of $\mathfrak{e}(2)$ satisfying the bracket relations $[z,x]=-y$, $[z,y]=x$, and $[x,y]=0$ declared to be $\g$-orthonormal. Note that every left-invariant $2$-form on $\mathrm{E}(2)$ is automatically closed: as $\mathrm{E}(2)$ is three-dimensional, it suffices to check that \eqref{closed-2-form} is satisfied when evaluated at $(u,v,w) = (x,y,z)$; this is obvious from the given bracket relations. The operator $\Y$ defined by $\Y(x)=0$, $\Y(y)=z$, $\Y(z)=-y$, is a rotation in the $yz$-plane, and thus skew-adjoint; define $\sigma$ via \eqref{eqn:Lorentz_force}. The operator ${\rm ad}_z$ is also skew-adjoint, being another rotation, but $[{\rm ad}_z,\Y](x)=z \neq 0$. Using \eqref{eqn:Koszul-ad-skew}, we compute that $\nabla_zx = -y$ and $\nabla_xz = \nabla_yz = \nabla_zz=0$, so that $R^\g(x,z)z = 0$ and $\langle (\nabla_z\Y)(x),z\rangle = 1$. From \eqref{eqn:mag-sec-quadratic}, we finally obtain that ${\rm Sec}^{(\g,\sigma,s)}(x,z) = -s+1/4 < 0$ for all $s>1/4$.
\end{ex}

\begin{ex}\label{ex:aff(C)}
  In view of \eqref{eqn:sec-not-symmetric}, one may also ask if the natural analogue of Proposition \ref{prop:sec-central} holds, with ${\rm Sec}^{(\g,\sigma,s)}(z,w) \geq 0$ for all $s>0$ and unit vector $w\in \mathfrak{g}$ orthogonal to $z$, provided that ${\rm ad}_z$ is skew-adjoint and commutes with $\Y$. This is, however, false: consider the group ${\rm Aff}(\mathbb{C})$ of affine functions on $\mathbb{C}$, equipped with the left-invariant metric $\g$ for which the basis $x,y,z,w$ of $\mathfrak{aff}(\mathbb{C})$ having the nonzero bracket relations $[x,w]=w$, $[x,y]=y$, $[z,w]=y$, and $[z,y]=-w$, is $\g$-orthonormal. Using \eqref{eqn:LC-B} and \eqref{eqn:Koszul-ad-skew}, we fully describe the Levi-Civita connection of $\g$ in Table \ref{tab:LC-aff}.
  \begin{table}[H]
    \centering
    \begingroup\renewcommand{\arraystretch}{1.2}\begin{equation}
    \begin{array}{|c||c|c|c|c|}
    \hline \nabla & x & y & z & w \\  \hline\hline x & 0 & 0 & \,\,0\,\, & \,\,0\,\,  \\ \hline y & -y & x & 0 & 0 \\ \hline z & 0 & -w & 0 & y \\ \hline w & -w & 0 & 0 & x \\ \hline
    \end{array}
  \end{equation}\endgroup \medskip
    \caption{The Levi-Civita product of $\g$ in $\mathfrak{aff}(\C)$.}
    \label{tab:LC-aff}
  \end{table}
  \noindent The operator $\Y$ defined by $\Y(x) = z$, $\Y(y)=0$, $\Y(z) = -x$, and $\Y(w) = 0$, being a rotation in the $xz$-plane, is skew-adjoint and defines a closed $2$-form $\sigma$, by \eqref{eqn:Lorentz_force} and \eqref{Y-kills-[g,g]}. It also commutes with ${\rm ad}_z$, which is a similar rotation in the $yw$-plane, and hence skew-adjoint as well. Using \eqref{eqn:mag-sec-quadratic} and Table \ref{tab:LC-aff}, we find that
  \begin{equation}\label{eqn:sec-aff(C)}
    {\rm Sec}^{(\g,\sigma,s)}(z,x) = 1, \quad\mbox{and}\quad {\rm Sec}^{(\g,\sigma,s)}(z,y) = {\rm Sec}^{(\g,\sigma,s)}(z,w)=-s,
  \end{equation}for all $s>0$, making the latter expression manifestly negative.
\end{ex}

\begin{obs}
  In \cite[Lemmas 2.1 and 2.3]{Milnor_1976}, two results on Ricci curvature are presented: (i) If $z\in \mathfrak{g}$ has skew-adjoint ${\rm ad}_z$, then ${\rm Ric}^\g(z) \geq 0$, with equality if and only if $z\in [\mathfrak{g},\mathfrak{g}]^\perp$, and (ii) if $z\in [\mathfrak{g},\mathfrak{g}]^\perp$, then ${\rm Ric}^{\g}(z) \leq 0$, with equality if and only if ${\rm ad}_z$ is skew-adjoint. Both are false in the magnetic setting. For (i), we recycle Example \ref{ex:aff(C)}: it follows from \eqref{eqn:mag-Ric} and \eqref{eqn:sec-aff(C)} that ${\rm Ric}^{(\g,\sigma,s)}(z) = 1-2s < 0$ for $s>1/2$. As for (ii), note from \eqref{eqn:Aww} that ${\rm tr}(A^{(\g,\sigma)}_{(x,v)}) \geq 0$, with equality if and only if $\Y_x=0$, so that making $s\to 0^+$ in \eqref{eqn:mag-Ric-div} shows that ${\rm Ric}^{(\g,\sigma,s)}(x,v) > 0$ for $s$ sufficiently small whenever $\Y_x\neq 0$, regardless of any properties one might require of $v$.
\end{obs}

\section{More on the bi-invariant case}\label{sec:non-triv-bi}

It is well-known that a Lie group admits a bi-invariant metric if and only if $G$ is isomorphic to the product of a compact group and an Abelian group \cite[p. 297]{Milnor_1976}. We may clearly replace ``bi-invariant metric'' with ``bi-invariant magnetic system'' here, by simply considering systems of the form $(\g,0)$. This raises the question of when is it possible to find a \emph{nontrivial} bi-invariant magnetic system, that is, one with $\sigma \neq 0$.

For example, rearranging \eqref{ad-invariance}, we have that
\begin{equation}\label{eqn:bilinear-bi-invariant}
\parbox{.61\textwidth}{a left-invariant bilinear form $\Theta$ is bi-invariant if and only if $\,\Theta([x,y],z) = \Theta(x,[y,z])\,$ for all $\,x,y,z\in \mathfrak{g}$.}  
\end{equation}
 With this, we claim that
\begin{equation}\label{eqn:perfect-bi-inv}
  \parbox{.56\textwidth}{if $\mathfrak{g} = [\mathfrak{g},\mathfrak{g}]$ (that is, $\mathfrak{g}$ is \emph{perfect}), the only bi-invariant bilinear form on $\mathfrak{g}$ is the trivial one.}
\end{equation}
 Indeed, if $\Theta$ is bi-invariant, the trilinear form $(x,y,z) \mapsto \Theta([x,y],z)$ is at the same time skew-symmetric in the first pair of arguments, and symmetric in the second pair by \eqref{eqn:bilinear-bi-invariant}, and hence vanishes, implying that $\Theta([\mathfrak{g},\mathfrak{g}],\mathfrak{g}) = 0$, and thus $\Theta=0$ as required. A similar argument shows that, in general, every bi-invariant $2$-form is automatically closed: applying bi-invariance of $\sigma$ to each term in the left side of \eqref{closed-2-form} yields ${\rm d}\sigma = -{\rm d}\sigma$.

Below, recall that a Lie algebra $\mathfrak{g}$ is called \emph{reductive} if there is a direct-sum decomposition
\begin{equation}\label{eqn:reductive-defn}
  \mathfrak{g} = \mathfrak{z(g)} \oplus \bigoplus_{i=1}^k \mathfrak{g}_i,\quad\mbox{with each of }\mathfrak{g}_1,\ldots, \mathfrak{g}_k\mbox{ simple,}
\end{equation}where $\mathfrak{z}(\mathfrak{g})$ is the center of $\mathfrak{g}$. This is a natural condition to consider, being automatically satisfied whenever $G$ admits a bi-invariant metric. Namely, in this case \eqref{eqn:bilinear-bi-invariant} ensures that $\mathfrak{z(g)}= [\mathfrak{g},\mathfrak{g}]^\perp$, yielding
\begin{equation}\label{eqn:z-plus-c}
  \parbox{.62\textwidth}{the orthogonal direct-sum splitting $\mathfrak{g} = \mathfrak{z}(\mathfrak{g}) \oplus [\mathfrak{g},\mathfrak{g}]$,}
\end{equation}
 with $[\mathfrak{g},\mathfrak{g}]$ semisimple: by \eqref{eqn:z-plus-c}, any Abelian ideal $\mathfrak{a}\subseteq [\mathfrak{g},\mathfrak{g}]$ must in fact commute with all of $\mathfrak{g}$, leading to $\mathfrak{a} \subseteq \mathfrak{z}(\mathfrak{g})\cap [\mathfrak{g},\mathfrak{g}]= \{0\}$.

\begin{prop}\label{prop:restriction-isomorphism}
  If $\mathfrak{g}$ is reductive, the restriction mapping
  \begin{equation}\label{eqn:restriction-isomorphism}
\left(\scalebox{1.2}{$\wedge$}^2 \mathfrak{g}^*\right)^G \ni \sigma \mapsto \sigma|_{\mathfrak{z(g)}\times \mathfrak{z(g)}} \in \scalebox{1.2}{$\wedge$}^2\mathfrak{z(g)}^*
  \end{equation}is an isomorphism; here, $\left(\scalebox{1.2}{$\wedge$}^2 \mathfrak{g}^*\right)^G$ denotes the space of bi-invariant $2$-forms on $\mathfrak{g}$. 
\end{prop}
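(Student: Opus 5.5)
Write $\mathfrak{g} = \mathfrak{z}\oplus\mathfrak{s}$, where $\mathfrak{z} = \mathfrak{z(g)}$ and $\mathfrak{s} = \bigoplus_{i=1}^k\mathfrak{g}_i = [\mathfrak{g},\mathfrak{g}]$ is semisimple; note $[\mathfrak{s},\mathfrak{s}]=\mathfrak{s}$ since each simple $\mathfrak{g}_i$ is perfect. By \eqref{eqn:bilinear-bi-invariant}, specialized to $2$-forms, a left-invariant $2$-form $\sigma$ is bi-invariant if and only if $\sigma([x,y],z) = \sigma(x,[y,z])$ for all $x,y,z\in\mathfrak{g}$. Equivalently, by \eqref{ad-invariance}, $\sigma({\rm ad}_y x, z) + \sigma(x,{\rm ad}_y z)=0$. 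The plan is to show that every bi-invariant $\sigma$ vanishes on every pair of arguments except $\mathfrak{z}\times\mathfrak{z}$. Then the restriction map \eqref{eqn:restriction-isomorphism} is injective. Conversely, extending any element of $\wedge^2\mathfrak{z}^*$ by zero on the other blocks gives a bi-invariant form, so the map is also surjective. Linearity is obvious.

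\textbf{Step 1: injectivity.} First show $\sigma(\mathfrak{s},\mathfrak{g}) = 0$. Since $\mathfrak{s}=[\mathfrak{s},\mathfrak{s}]$, it suffices to compute $\sigma([x,y],z)$ for $x,y\in\mathfrak{s}$ and $z\in \mathfrak{g}$. Bi-invariance gives $\sigma([x,y],z)=\sigma(x,[y,z])$. If $z\in\mathfrak{z}$, the right side is $0$. So it remains to handle $\sigma|_{\mathfrak{s}\times\mathfrak{s}}$.

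For that piece, I would apply the argument of \eqref{eqn:perfect-bi-inv} to the restriction of $\sigma$ to $\mathfrak{s}$. The restriction is still invariant under ${\rm ad}_y$ for $y\in\mathfrak{s}$, since $\mathfrak{s}$ is an ideal, and $\mathfrak{s}$ is perfect. Concretely, the trilinear form $(x,y,z)\mapsto \sigma([x,y],z)$ on $\mathfrak{s}$ is skew in $(x,y)$. It is also \emph{skew} in $(y,z)$, because $\sigma([x,y],z)=\sigma(x,[y,z])$ and $[y,z]$ is skew. Together with the cyclic relation coming from invariance, this should force the form to vanish, which is where the argument differs from the symmetric case. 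I expect this to be the main subtle point, since \eqref{eqn:perfect-bi-inv} as stated covers bilinear forms, and the sign bookkeeping must be checked for skew ones.

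If that fails, I would fall back on the fact that $\mathfrak{s}$ is semisimple. By Whitehead's lemma $H^2(\mathfrak{s},\R)=0$, and invariant $2$-forms are closed (as noted before \eqref{eqn:reductive-defn}). So $\sigma|_{\mathfrak{s}}=-{\rm d}\alpha$, i.e. $\sigma(x,y)=\alpha([x,y])$. Averaging, or using that invariants of the coadjoint action on a semisimple algebra vanish, one can take $\alpha$ invariant, hence $\alpha=0$ because $\alpha([y,x])=-\alpha({\rm ad}_y x)$. Alternatively, the Killing form $K$ gives a skew operator $T$ with $\sigma = K(T\cdot,\cdot)$ that commutes with all ${\rm ad}_y$. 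Schur's lemma on each simple factor then forces $T$ to be trivial on each $\mathfrak{g}_i$, up to a possible complex-structure issue in the real case. Either route kills $\sigma|_{\mathfrak{s}\times\mathfrak{s}}$.

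\textbf{Step 2: surjectivity.} Given $\tau\in\wedge^2\mathfrak{z}^*$, define $\sigma$ on $\mathfrak{g}$ by $\sigma = \tau\circ(\pi\times\pi)$, where $\pi$ is the projection onto $\mathfrak{z}$ along $\mathfrak{s}$. For any $y$, the image of ${\rm ad}_y$ is $[\mathfrak{g},\mathfrak{g}]=\mathfrak{s}$, which lies in $\ker\pi$. Hence both terms in \eqref{ad-invariance} vanish, so $\sigma$ is bi-invariant, and it restricts to $\tau$.
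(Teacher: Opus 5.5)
Your proposal is correct and follows the paper's proof. You get $\sigma(\mathfrak{s},\mathfrak{z})=0$ from perfectness and $\sigma([x,y],z)=\sigma(x,[y,z])$, you get $\sigma|_{\mathfrak{s}\times\mathfrak{s}}=0$ from the perfect-algebra argument behind \eqref{eqn:perfect-bi-inv}, and surjectivity comes from extension by zero. Applying that argument to all of $\mathfrak{s}$ at once is legitimate, since $\mathfrak{s}$ is itself perfect, and it simply absorbs the paper's separate $i\neq j$ case.

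The step you left as ``should'' is the heart of the proof, but it closes in one line. Set $T(x,y,z)=\sigma([x,y],z)$, which you showed is totally skew. Invariance and skewness of $\sigma$ give $T(x,y,z)=\sigma(x,[y,z])=-\sigma([y,z],x)=-T(y,z,x)$. Total skewness gives $T(y,z,x)=T(x,y,z)$, since cyclic permutations are even, so $T=0$.

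Your worry actually runs the other way. Skewness of $\sigma$ is exactly what makes the argument work: it makes $T$ symmetric in the outer pair $(x,z)$, and that is the symmetry the ``skew in one pair, symmetric in another'' argument of \eqref{eqn:perfect-bi-inv} really uses. For symmetric forms the conclusion of \eqref{eqn:perfect-bi-inv} is false; the Killing form of a semisimple algebra is a counterexample.

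So neither fallback is needed, and as written each has a loose end.
\begin{enumerate}
\item In the Whitehead route, averaging is unavailable when $\mathfrak{s}$ is noncompact. The primitive $\alpha$ is nonetheless automatically invariant, because it is unique when $\mathfrak{s}$ is perfect.
\item In the Schur route, you must note that a complex structure on a simple factor is symmetric for the Killing form, so it cannot contribute to a skew operator $T$.
\end{enumerate}
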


\begin{proof}
  For a decomposition of $\mathfrak{g}$ as in \eqref{eqn:reductive-defn}, and any bi-invariant $2$-form $\sigma$ on $G$, we first claim that
  \begin{equation}\label{sigma-gi-gj}
    \parbox{.73\textwidth}{\textrm{(i)} $\sigma(\mathfrak{g}_i,\mathfrak{g}_j) = 0$, and \textrm{(ii)} $\sigma(\mathfrak{g}_i,\mathfrak{z(g)}) = 0$, for all $i,j=1,\ldots, k$.}
  \end{equation}
  If $i=j$, \mbox{(\ref{sigma-gi-gj}-i)} follows from \eqref{eqn:perfect-bi-inv} as each $\mathfrak{g}_i$, being simple, is also perfect. Assuming that $i\neq j$ instead, let $\mathfrak{b}$ be either $\mathfrak{g}_j$ or $\mathfrak{z(g)}$, take $x,y\in \mathfrak{g}_i$ and $z\in \mathfrak{b}$, and note that $\sigma([x,y],z) = \sigma(x,[y,z]) = 0$ as $[\mathfrak{g}_i,\mathfrak{b}]=0$, so that $\sigma([\mathfrak{g}_i,\mathfrak{g}_i],\mathfrak{b})=0$, leading to both \mbox{(\ref{sigma-gi-gj}-i)} and \mbox{(\ref{sigma-gi-gj}-ii)} due to perfectness of $\mathfrak{g}_i$. Finally, condition \eqref{sigma-gi-gj} amounts to injectivity of \eqref{eqn:restriction-isomorphism}, while its surjectivity is clear: given a $2$-form on $\mathfrak{z(g)}$, extend it by zero via \eqref{eqn:reductive-defn} and \eqref{sigma-gi-gj} to a $2$-form on $\mathfrak{g}$; bi-invariance of the resulting extension is immediate from \eqref{eqn:bilinear-bi-invariant}, as all relevant brackets vanish.
\end{proof}

We may now establish the main result of this section.

\begin{teo}
  A connected Lie group $G$ admits a \emph{non-trivial} bi-invariant magnetic system if and only if $G$ is isomorphic to a direct product $K\times \R^m$, where $\R^m$ is Abelian and $K$ is a compact Lie group with $\dim \mathfrak{z(k)} + m \geq 2$.
\end{teo}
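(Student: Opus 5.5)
The plan is to reduce everything to the Lie algebra and then apply Proposition \ref{prop:restriction-isomorphism}. A bi-invariant magnetic system $(\g,\sigma)$ with $\sigma\neq 0$ consists of two pieces of data. The first is a bi-invariant metric $\g$; by the discussion opening this section, such a metric exists exactly when $G\cong K\times\R^m$ with $K$ compact and $\R^m$ Abelian. The second is a nonzero bi-invariant $2$-form $\sigma$. Any bi-invariant $2$-form is automatically closed, as shown after \eqref{eqn:perfect-bi-inv}. Hence the problem becomes: for $G$ admitting a bi-invariant metric, decide when $\left(\scalebox{1.2}{$\wedge$}^2 \mathfrak{g}^*\right)^G\neq\{0\}$.

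Suppose $G$ admits a bi-invariant metric. By \eqref{eqn:z-plus-c}, $\mathfrak{g}$ is reductive, since $[\mathfrak{g},\mathfrak{g}]$ is semisimple and therefore splits into simple ideals. Proposition \ref{prop:restriction-isomorphism} then identifies the bi-invariant $2$-forms with $\scalebox{1.2}{$\wedge$}^2\mathfrak{z(g)}^*$. This space is nonzero if and only if $\dim\mathfrak{z(g)}\geq 2$. For $G\cong K\times\R^m$ the Lie algebra is $\mathfrak{k}\oplus\R^m$ with $\R^m$ Abelian, so $\mathfrak{z(g)}=\mathfrak{z(k)}\oplus\R^m$. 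Therefore $\dim\mathfrak{z(g)}=\dim\mathfrak{z(k)}+m$.

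For the forward direction, assume $(\g,\sigma)$ is bi-invariant with $\sigma\neq0$. Then $\g$ is a bi-invariant metric, so $G\cong K\times\R^m$. Also $\sigma$ is a nonzero element of $\left(\scalebox{1.2}{$\wedge$}^2 \mathfrak{g}^*\right)^G$, which forces $\dim\mathfrak{z(k)}+m\geq2$.

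For the converse, take such a product. Pick any bi-invariant metric $\g$, for instance a product of an ${\rm Ad}$-invariant inner product on $\mathfrak{k}$, obtained by averaging over compact $K$, with a Euclidean metric on $\R^m$. Next pick a nonzero $2$-form on $\mathfrak{z(g)}$, which exists because $\dim \mathfrak{z(g)}\geq 2$. Extend it via Proposition \ref{prop:restriction-isomorphism} to a nonzero bi-invariant $\sigma$; this $\sigma$ is closed, so $(\g,\sigma)$ is the required nontrivial system.

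The main obstacle is the subtlety about ``isomorphic to'': the decomposition $K\times\R^m$ is only up to isomorphism, and $\mathfrak{z(k)}+m$ must be intrinsic. This holds because the center dimension of $\mathfrak{g}$ is an isomorphism invariant and equals $\dim\mathfrak{z(k)}+m$ for any such splitting. I would also double-check that the extension in Proposition \ref{prop:restriction-isomorphism} uses the splitting \eqref{eqn:reductive-defn}, which is available here, and that ${\rm Ad}$-invariance for connected $G$ follows from the infinitesimal criterion \eqref{ad-invariance}.
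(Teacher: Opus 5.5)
Your proposal is correct and takes the same route as the paper. Both arguments combine Milnor's characterization of connected groups admitting a bi-invariant metric with Proposition \ref{prop:restriction-isomorphism} and the identity $\dim\mathfrak{z(g)}=\dim\mathfrak{z(k)}+m$. You are somewhat more explicit than the paper about reductivity, the automatic closedness of bi-invariant $2$-forms, and the construction of the metric in the converse, but these are the same steps.
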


\begin{proof}
 It follows from Proposition \ref{prop:restriction-isomorphism} that, in the presence of a bi-invariant metric, nonzero bi-invariant $2$-forms exist if and only if $\dim \mathfrak{z(g)} \geq 2$; this is precisely the condition needed for $\dim \left(\scalebox{1.2}{$\wedge$}^2 \mathfrak{g}^*\right)^G > 0$. We now invoke \cite[p. 297]{Milnor_1976}, stated in the beginning of this section, and note that $\dim \mathfrak{z(g)} = \dim \mathfrak{z(k)} + m$.
\end{proof}

\begin{cor}
  A connected and semisimple Lie group does not admit nontrivial bi-invariant magnetic systems.
\end{cor}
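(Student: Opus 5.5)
The argument goes through the perfectness observation \eqref{eqn:perfect-bi-inv}, applied twice: once to the metric and once to the magnetic $2$-form. Let $G$ be connected and semisimple, and suppose $(\g,\sigma)$ is a bi-invariant magnetic system on $G$. We aim to show that $\sigma=0$.

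The key algebraic input is that a semisimple Lie algebra is perfect. Indeed, $\mathfrak{g}$ decomposes as a direct sum of simple ideals $\mathfrak{g}_1\oplus\cdots\oplus\mathfrak{g}_k$. Each $\mathfrak{g}_i$ is non-Abelian, so $[\mathfrak{g}_i,\mathfrak{g}_i]$ is a nonzero ideal of $\mathfrak{g}_i$ and must equal $\mathfrak{g}_i$. Summing over $i$ gives $[\mathfrak{g},\mathfrak{g}]=\mathfrak{g}$.

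With perfectness in hand, \eqref{eqn:perfect-bi-inv} says that every bi-invariant bilinear form on $\mathfrak{g}$ vanishes. A bi-invariant magnetic $2$-form $\sigma$ is in particular such a form, so $\sigma=0$, and the system is trivial. The same principle applied to the metric itself shows more: no bi-invariant bilinear form is nonzero, so a connected semisimple group admits no bi-invariant metric at all. Hence it admits no bi-invariant magnetic system of any kind, trivial or not.

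One might instead try to deduce the statement from the preceding Theorem, using $\mathfrak{z(g)}=0$. That route needs care, since it requires matching the abstract semisimple $G$ with some $K\times\R^m$ while keeping track of $\mathfrak{z(k)}$ and $m$. The direct route through \eqref{eqn:perfect-bi-inv} avoids this entirely, so I would present that one. The only substantive step is the perfectness of semisimple algebras, which is standard, so I expect no real obstacle.
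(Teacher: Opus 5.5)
Your proof of the corollary is correct, and it is more direct than the paper's. The paper gives no separate proof; the corollary is meant to follow from the preceding theorem. Semisimplicity gives $\mathfrak{z}(\mathfrak{g})=0$, so in $G\cong K\times\R^m$ one has $\dim\mathfrak{z}(\mathfrak{k})+m=\dim\mathfrak{z}(\mathfrak{g})=0<2$. Equivalently, a semisimple $\mathfrak{g}$ is reductive with trivial center, so Proposition~\ref{prop:restriction-isomorphism} gives $(\wedge^2\mathfrak{g}^*)^G\cong\wedge^2\mathfrak{z}(\mathfrak{g})^*=0$. The bookkeeping you were wary of is therefore a one-liner.

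Your argument is the $\mathfrak{z}(\mathfrak{g})=0$ case of that proposition, streamlined. Because $\mathfrak{g}$ itself is perfect, you apply \eqref{eqn:perfect-bi-inv} once to all of $\mathfrak{g}$. You need no separate treatment of the blocks $\sigma(\mathfrak{g}_i,\mathfrak{g}_j)$ with $i\neq j$, and you never use the metric or Milnor's structure theorem. The paper's route, in exchange, presents the corollary as the degenerate case of an exact count of bi-invariant $2$-forms.

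However, your paragraph claiming that ``the same principle applied to the metric'' rules out bi-invariant metrics on connected semisimple groups is false and should be deleted. So is its conclusion that such groups admit no bi-invariant magnetic system at all. The group $\mathrm{SU}(2)$ is compact and semisimple, and minus its Killing form $\beta$ is a bi-invariant metric, so $(-\beta,0)$ is a trivial bi-invariant magnetic system. This is exactly why the corollary speaks of \emph{nontrivial} systems. It also matches the fact, recalled at the start of Section~\ref{sec:non-triv-bi}, that compact groups carry bi-invariant metrics.

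Although \eqref{eqn:perfect-bi-inv} is worded for bilinear forms, the argument behind it needs skew-symmetry. Set $T(x,y,z)=\Theta([x,y],z)$; invariance gives $T(x,y,z)=\Theta(x,[y,z])$. Skewness, $\Theta(a,b)=-\Theta(b,a)$, turns this into $T(x,y,z)=-T(y,z,x)$, and going three times around the cycle yields $T=-T$. For symmetric $\Theta$ the same step gives $T(x,y,z)=T(y,z,x)$, which is no contradiction. For $\Theta=\beta$, the form $T$ is the nonzero Cartan $3$-form. So read \eqref{eqn:perfect-bi-inv} as a statement about $2$-forms, which is all your proof of the corollary uses.
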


\begin{ex}
  If $\g$ is a bi-invariant metric on $G$, we may consider $\Y = {\rm ad}_\xi$ for some $\xi \in \mathfrak{g}$; the corresponding 2-form $\sigma_\xi$ obtained via \eqref{eqn:Lorentz_force} is closed: \eqref{closed-2-form} becomes the Jacobi identity. This example, at first natural, has a crucial ``flaw'': if $(\g,\sigma_\xi)$ is bi-invariant, then $\sigma_\xi=0$. Indeed, in this case Proposition \ref{prop:bi-inv-char} tells us that $0= [{\rm ad}_\xi, {\rm ad}_x] = {\rm ad}_{[\xi,x]}$ for all $x\in \mathfrak{g}$, so that $[\xi,x] \in \mathfrak{z(g)} \cap [\mathfrak{g},\mathfrak{g}]=\{0\}$, cf. \eqref{eqn:z-plus-c}, and thus $\Y={\rm ad}_\xi=0$. When $\g$ is bi-invariant and $\sigma_\xi$ is not, from \eqref{eqn:mag-sec-quadratic} and \eqref{eqn:mag-Ric} we still obtain
  \begin{equation}
    \begin{split}
    {\rm Sec}^{(\g,\sigma_\xi, s)}(v,w) &= \frac{1}{4} \|[sv-\xi, w]\|^2 +\frac{3}{4}\langle [\xi,v],w\rangle^2       \\ \mbox{and}\quad {\rm Ric}^{(\g,\sigma_\xi, s)}(v) &= -\frac{1}{4} \beta(sv-\xi,sv-\xi) + \frac{3}{4} \|[\xi,v]\|^2
    \end{split}
  \end{equation}for all orthonormal $v,w\in\mathfrak{g}$, where $\beta$ is the Killing form of $\mathfrak{g}$.
\end{ex}

\section{Magnetic flatness of left-invariant systems}\label{sec:left-inv-flatness}

The meaning of magnetic flatness for a general magnetic system $(\g,\sigma)$ on a smooth manifold $M$ is well-understood:
\begin{equation}\label{mag-flatness}
  \parbox{.79\textwidth}{If $s>0$ is such that ${\rm Sec}^{(\g,\sigma,s)} = 0$, then $\nabla\sigma=0$ and one of the following must hold: (i) $\sigma=0$ and $\g$ is flat, or (ii) $\sigma$ is nowhere-vanishing and $J = \|\Y\|^{-1}\Y$ turns $(M,\g)$ into a K\"{a}hler manifold of constant negative holomorphic sectional curvature $-\|\Y\|^2/s^2$,} 
\end{equation}cf. \cite[Theorem D]{ART_2024}. Here, $\|\Y\|$ denotes the operator norm of $\Y$. The goal of this section is to refine \eqref{mag-flatness} in the case of left-invariant magnetic systems on Lie groups, by proving Theorems \ref{teo:Milnor-Flat} and \ref{teo:solvable}. One general observation we will need is that if a K\"{a}hler manifold has constant holomorphic sectional curvature $k$, then its sectional curvature is bounded between $k/4$ and $k$, cf. \cite[Propositions 7.3 and 7.4]{KN2}; in particular, it follows that
\begin{equation}\label{eqn:neg-hol}
  \parbox{.88\textwidth}{if $k < 0$, both the sectional and Ricci curvatures are also strictly negative.}
\end{equation}
In both Theorems \ref{teo:Milnor-Flat} and \ref{teo:solvable} the situation where $\sigma=0$ amounts to \cite[Theorems 1.5 and 1.6]{Milnor_1976}, and so it suffices to address the case where $\sigma$ is nowhere-vanishing.

\begin{proof}[Proof of Theorem \ref{teo:Milnor-Flat}]
  If $\sigma$ is nowhere-vanishing, it follows from \mbox{(\ref{mag-flatness}-ii)} and \eqref{eqn:neg-hol} that $\g$ is negatively curved, allowing us to invoke the result by Heintze mentioned in the Introduction---it follows that $(G,\g)$ with the complex structure $J = \|\Y\|^{-1}\Y$ is holomorphically isometric to a $\C\mathbb{H}^n$ \cite[Theorem 4]{Heintze_1974}. Conversely, if $(G,\g)$ equipped with $J=\|\Y\|^{-1}\Y$ is holomorphically isometric to a $\C\mathbb{H}^n$ with constant holomorphic curvature $k<0$, then $(\g,\sigma)$ is in particular a K\"{a}hler magnetic system (as $\|\Y\|$ is constant by left-invariance of $\g$ and $\Y$), with $s$-magnetic sectional curvature given by
  \begin{equation}
   {\rm Sec}^{(\g,\sigma,s)}(v,w) = \frac{s^2k+\|\Y\|^2}{4}(1+3\langle v,Jw\rangle^2),
  \end{equation}cf. \cite[Formula (6.2)]{ART_2024}; it vanishes identically for $s = \|\Y\|/\sqrt{-k}$.
\end{proof}

\begin{proof}[Proof of Theorem \ref{teo:solvable}]
 As in the previous proof, the sectional curvature is strictly negative. By \cite[Remark in p. 27]{Heintze_1974}, the unique connected and simply connected Lie group $\widetilde{G}$ having $\mathfrak{g}$ as its Lie algebra is solvable. Being connected, $G$ is a quotient of $\widetilde{G}$, and hence solvable as well. Finally, if $\sigma \neq 0$, $G$ cannot be unimodular, as otherwise \eqref{eqn:neg-hol} contradicts a classical result by Dotti: no solvable unimodular Lie group admits a left-invariant Riemannian metric of strictly negative Ricci curvature \cite[Corollary 3.3]{Dotti_1982}.
\end{proof}

\section{The Heisenberg group}\label{sec:Heisenberg}

Let $(V,\Omega)$ be a symplectic vector space, and denote by ${\rm H}(V,\Omega)$ its abstract Heisenberg group. Namely, ${\rm H}(V,\Omega)$ is the Cartesian product $V\times \R$ equipped with the group operation $(x,a)(y,b) = (x+y,a+b + \Omega(x,y)/2)$, and its Lie algebra $\mathfrak{h}(V,\Omega)=V\times \R$ has the bracket $[(v,a),(w,b)] = (0,\Omega(v,w))$.  The group ${\rm H}(V,\Omega)$ is two-step nilpotent, its center equals $\{0\}\times \R$ due to nondegeneracy of $\Omega$, and so the resulting short exact sequence $0 \to \R \to {\rm H}(V,\Omega) \to V \to 0$ yields that ${\rm H}(V,\Omega)$ is a central extension of $V$ by $\R$. Let $\langle\cdot,\cdot\rangle_0$ be any inner product on $V$, and consider the left-invariant magnetic system $(\g,\sigma)$ on ${\rm H}(V,\Omega)$ given by
\begin{equation}\label{eqn:sys-heisenberg}
  \g((v,a),(w,b)) = \langle v,w\rangle_0+ab\quad\mbox{and}\quad \sigma((v,a),(w,b)) = \Omega(v,w)
\end{equation}on $\mathfrak{h}(V,\Omega)$. From \eqref{Y-kills-[g,g]}, $\sigma$ is closed; in fact, $\sigma = - {\rm d}\alpha$, where $\alpha\colon \mathfrak{h}(V,\Omega)\to \R$ denotes the second coordinate projection. Note that all left-invariant magnetic systems on ${\rm H}(V,\Omega)$ having a left-invariant potential 1-form and making $V$ and $\R$  orthogonal are, up to rescaling, of the form \eqref{eqn:sys-heisenberg}. The Lorentz force operator of $(\g,\sigma)$ is given by $\Y(v,a) = (\Y_0(v),0)$, where $\Y_0$ is the operator on $V$ characterized $\langle \Y_0(v),w\rangle_0 = \Omega(v,w)$ for all $v,w\in V$. We similarly have that
\begin{equation}\label{eqn:ad-heisenberg}
  {\rm ad}_{(v,a)}(w,b) = (0, \Omega(v,w))\quad\mbox{and}\quad {\rm ad}_{(v,a)}^\dagger(w,b) = (b \Y_0(v), 0),
\end{equation}and hence the only elements $(v,a)$ for which ${\rm ad}_{(v,a)}$ is skew-adjoint are precisely the ones with $v=0$. Setting ${\bf z} = (0,1)$ and identifying $(v,a)$ with $v+a{\bf z}$, we use \eqref{eqn:LC-B}, \eqref{eqn:Koszul-ad-skew}, and \eqref{eqn:ad-heisenberg} to obtain
\begin{equation}\label{eqn:LC-heisenberg}
  \nabla_vw = \frac{1}{2}\Omega(v,w){\bf z},\quad  \nabla_{\bf z}v = \nabla_v{\bf z} = -\frac{1}{2}\Y_0(v),\quad\mbox{and}\quad\nabla_{\bf z}{\bf z} = 0.
\end{equation}
Noting that ${\bf z}$ is central with $\Y({\bf z})=0$, and $\g([u,v],w)=0$ for all $u,v,w\in V$, we use \eqref{eqn:LC-heisenberg} to evaluate the three coefficients in \eqref{eqn:mag-sec-quadratic}, so that
\begin{equation}\label{eqn:mag-sec-heisenberg}
 \begin{split} {\rm Sec}^{(\g,\sigma,s)}(v+a{\bf z}, w+b{\bf z}) &=  s^2\left(\frac{a^2}{4} \|\Y_0(w)\|^2+\frac{b^2}{4} \|\Y_0(v)\|^2 \right. \\ &\quad\left. -\frac{3}{4}\Omega(v,w)^2-\frac{ab}{2} \langle \Y_0(v),\Y_0(w)\rangle\right) \\ &\quad+ s\left(\frac{a}{2} \|\Y_0(w)\|^2-\frac{b}{2}\langle \Y_0(v),\Y_0(w)\rangle\right) \\ &\quad+\frac{3}{4}\Omega(v,w)^2+\frac{1}{4}\|\Y_0(w)\|^2\end{split}
\end{equation}for all orthonormal $v+a{\bf z},w+b{\bf z}\in \mathfrak{h}(V,\Omega)$. Notable particular instances of \eqref{eqn:mag-sec-heisenberg}, for orthonormal $v,w\in V$, are
\begin{equation}\label{eqn:specific-sec-heisenberg}
  \begin{split}
     {\rm (i)}&~ {\rm Sec}^{(\g,\sigma,s)}(v,w)  = \frac{1}{4}\left( 3(1-s^2) \Omega(v,w)^2  + \|\Y_0(w)\|^2\right), \\ {\rm (ii)}&~{\rm Sec}^{(\g,\sigma,s)}(v, \pm {\bf z}) = \frac{s^2}{4} \|\Y_0(v)\|^2,\quad\mbox{and}  \\ {\rm (iii)}&~{\rm Sec}^{(\g,\sigma,s)}(\pm{\bf z}, w) = \frac{(s \pm 1)^2}{4} \|\Y_0(w)\|^2.
  \end{split}
\end{equation}We compute the $s$-magnetic Ricci curvature of $(\g,\sigma)$ as well, using relation \eqref{eqn:mag-Ric}. Setting $\|\Y_0\| = \sqrt{-{\rm tr}(\Y_0^2)}>0$, it is clear from \mbox{(\ref{eqn:specific-sec-heisenberg}-iii)} that
\begin{equation}\label{eqn:Ric-pmz}
  {\rm Ric}^{(\g,\sigma,s)}(\pm{\bf z}) = \frac{(s\pm 1)^2}{4} \|\Y_0\|^2,
\end{equation}and it remains to find ${\rm Ric}^{(\g,\sigma,s)}(v+a{\bf z})$, for a unit vector $v+a{\bf z} \in \mathfrak{h}(V,\Omega)$ with $v\neq 0$ (which itself does \emph{not} have unit length). One orthonormal basis for the orthogonal complement of $v+a{\bf z}$ in $\mathfrak{h}(V,\Omega)$ is $e_1,\ldots, e_{n-1}, -a\|v\|_0^{-1} v + \|v\|_0{\bf z}$, where $e_1,\ldots, e_{n-1}$ is an orthonormal basis for the orthogonal complement of $v$ in $V$, cf. Figure \ref{fig:heis-orth}.
\begin{figure}[H]
  \centering
  \tikzset{every picture/.style={line width=0.75pt}} 
\begin{tikzpicture}[x=0.75pt,y=0.75pt,yscale=-1,xscale=1]
\draw   (199.7,103) -- (493,103) -- (367.3,190.5) -- (74,190.5) -- cycle ;
\draw    (296.5,140) -- (296.01,25) ;
\draw [shift={(296,23)}, rotate = 89.76] [color={rgb, 255:red, 0; green, 0; blue, 0 }  ][line width=0.75]    (10.93,-3.29) .. controls (6.95,-1.4) and (3.31,-0.3) .. (0,0) .. controls (3.31,0.3) and (6.95,1.4) .. (10.93,3.29)   ;
\draw  [dash pattern={on 4.5pt off 4.5pt}]  (296.5,140) -- (296.5,177.5) ;
\draw    (296.5,177.5) -- (296,239.5) ;
\draw    (296.5,140) -- (340.11,86.33) ;
\draw [shift={(342,84)}, rotate = 129.09] [fill={rgb, 255:red, 0; green, 0; blue, 0 }  ][line width=0.08]  [draw opacity=0] (8.93,-4.29) -- (0,0) -- (8.93,4.29) -- (5.93,0) -- cycle    ;
\draw  [dash pattern={on 0.84pt off 2.51pt}]  (342,84) -- (342,170) ;
\draw    (296.5,140) -- (339.5,168.35) ;
\draw [shift={(342,170)}, rotate = 213.4] [fill={rgb, 255:red, 0; green, 0; blue, 0 }  ][line width=0.08]  [draw opacity=0] (8.93,-4.29) -- (0,0) -- (8.93,4.29) -- (5.93,0) -- cycle    ;
\draw    (296.5,140) -- (252.69,180.95) ;
\draw [shift={(250.5,183)}, rotate = 316.93] [fill={rgb, 255:red, 0; green, 0; blue, 0 }  ][line width=0.08]  [draw opacity=0] (10.72,-5.15) -- (0,0) -- (10.72,5.15) -- (7.12,0) -- cycle    ;
\draw  [draw opacity=0][dash pattern={on 4.5pt off 4.5pt}] (251.54,88.04) .. controls (274.56,65.37) and (311.84,62.66) .. (339.11,83.08) .. controls (340.13,83.84) and (341.12,84.62) .. (342.09,85.43) -- (299.43,136.09) -- cycle ; \draw  [dash pattern={on 4.5pt off 4.5pt}] (251.54,88.04) .. controls (274.56,65.37) and (311.84,62.66) .. (339.11,83.08) .. controls (340.13,83.84) and (341.12,84.62) .. (342.09,85.43) ;  
\draw    (296.5,140) -- (253.51,90.31) ;
\draw [shift={(251.54,88.04)}, rotate = 49.13] [fill={rgb, 255:red, 0; green, 0; blue, 0 }  ][line width=0.08]  [draw opacity=0] (8.93,-4.29) -- (0,0) -- (8.93,4.29) -- (5.93,0) -- cycle    ;

\draw (302.5,19.9) node [anchor=north west][inner sep=0.75pt]    {$\mathbb{R}$};
\draw (475,85) node [anchor=north west][inner sep=0.75pt]    {$V$};
\draw (345,80) node [anchor=north west][inner sep=0.75pt]    {$v+a{\bf z}$};
\draw (345,162) node [anchor=north west][inner sep=0.75pt]    {$v$};
\draw (233,172) node [anchor=north west][inner sep=0.75pt]    {$e_{i}$};
\draw (142,80) node [anchor=north west][inner sep=0.75pt]    {$-a\| v\| ^{-1} v+\| v\| {\bf z}$};
\end{tikzpicture}
  \caption{An orthonormal basis for $(v+a{\bf z})^\perp \subseteq \mathfrak{h}(V,\Omega)$.}
  \label{fig:heis-orth}
\end{figure}
The unit-length condition $\|v\|^2 + a^2 = 1$ allows for significant simplifications in the expressions to come. Firstly, \eqref{eqn:mag-sec-heisenberg} yields
\begin{equation}\label{eqn:sec_heis_rotated}
  {\rm Sec}^{(\g,\sigma,s)}\left(v+a{\bf z}\,,\, -a\|v\|^{-1}v + \|v\|{\bf z}\right) = \frac{(s+a)^2}{4} \frac{\|\Y_0(v)\|^2}{\|v\|^2},
\end{equation}while
\begin{equation}
  \sum_{i=1}^{n-1} \|\Y_0(e_i)\|^2 = \|\Y_0\|^2 - \frac{\|\Y_0(v)\|^2}{\|v\|^2}\quad\mbox{and}\quad \sum_{i=1}^{n-1} \Omega(v,e_i)^2 = \|\Y_0(v)\|^2
\end{equation}now imply that
\begin{equation}\label{eqn:sec_heis_sum}
  \begin{split}
    \sum_{i=1}^{n-1} {\rm Sec}^{(\g,\sigma, s)}(v+a{\bf z}, e_i) &= \frac{(1+sa)^2}{4} \left(\|\Y_0\|^2 -\frac{\|\Y_0(v)\|^2}{\|v\|^2}\right) \\ &\quad + \frac{3(1-s^2)}{4} \|\Y_0(v)\|^2.
  \end{split}
\end{equation}Adding \eqref{eqn:sec_heis_rotated} to \eqref{eqn:sec_heis_sum} and factoring further leads to the surprisingly concise formula
\begin{equation}\label{eqn:ric-simplified}
  {\rm Ric}^{(\g,\sigma,s)}(v+a{\bf z}) = \frac{(1+sa)^2}{4} \|\Y_0\|^2 + \frac{(1-s^2)}{2} \|\Y_0(v)\|^2.
\end{equation}
It follows from \eqref{eqn:Ric-pmz} and \eqref{eqn:ric-simplified} that ${\rm Ric}^{(\g,\sigma,s)}>0$ whenever $0<s<1$, while ${\rm Ric}^{(\g,\sigma,s_0)}(-{\bf z}) = 0$ for $s_0=1$ by \eqref{eqn:Ric-pmz}, correctly hinting that the Ma\~{n}\'{e} critical value---which marks the most drastic changes in the geometric and dynamical behavior of the corresponding magnetic flow---is ${\bf c}(\g,\sigma)= s_0^2/2 = 1/2$, cf. \cite[Theorem 2.4]{ABM_2025} and \cite[Theorem 1]{EGM_2021}.

\appendix

\section{A magnetic Euler-Arnold equation for exact systems}\label{app:EA}

It is well-known \cite{Arnold_1966, Arnold_1989} that if $G$ is a Lie group and $\g$ is a left-invariant Riemannian metric on $G$, then a curve $\gamma\colon I\to G$ is a geodesic if and only if $x(t) = \gamma(t)^{-1}\dot{\gamma}(t)$ satisfies the so-called \emph{Euler-Arnold equation} $\dot{x}(t) = {\rm ad}_{x(t)}^\dagger x(t)$. For completeness of our presentation, we exhibit a magnetic generalization of this formalism in Proposition \ref{prop:mag-EA}; it has already been considered, for instance, in \cite{Ovando-Subils_2023, Maier_2026}.

We follow the strategy adopted in \cite[Proposition 2.1]{brigant2025conjugatepointsliegroups}. The first ingredient needed is what they call the ``zero-curvature'' equation, and a reference likely to \cite[Formula (5.1)]{Milnor_1976}. We instead provide a direct proof. 

\begin{lem}[The zero-curvature equation]
  Given a smooth curve $\gamma\colon I\to G$ and a variation $\eta\colon I\times (-\varepsilon,\varepsilon)\to G$ of $\gamma$, the translated partial derivatives
  \begin{equation}\label{eqn:translated-uv}
    u(t,s) = \eta(t,s)^{-1} \partial_t\eta(t,s)\quad\mbox{and}\quad v(t,s) = \eta(t,s)^{-1} \partial_s\eta(t,s)
  \end{equation}satisfy the equation $\partial_su - \partial_tv = {\rm ad}_uv$.
\end{lem}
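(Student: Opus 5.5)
The plan is to show that both sides of $\partial_su - \partial_tv = {\rm ad}_uv$ are $\mathfrak{g}$-valued functions of $(t,s)$, and to compare them after pairing with an arbitrary left-invariant $1$-form. Fix $\theta\in\mathfrak{g}^*$ and regard it as a left-invariant $1$-form on $G$. Then $\theta(u) = (\eta^*\theta)(\partial_t)$ and $\theta(v) = (\eta^*\theta)(\partial_s)$, because left-invariance gives $\theta_{\eta}(\partial_t\eta) = \theta_e(\eta^{-1}\partial_t\eta)$. It therefore suffices to compute ${\rm d}(\eta^*\theta)(\partial_t,\partial_s)$ in two ways.

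\textbf{First computation (coordinates on the domain).} Since $[\partial_t,\partial_s]=0$, we have
\begin{equation*}
{\rm d}(\eta^*\theta)(\partial_t,\partial_s) = \partial_t\big(\theta(v)\big) - \partial_s\big(\theta(u)\big) = \theta(\partial_tv - \partial_su).
\end{equation*}
Here $\theta$ is a fixed linear functional, so it commutes with differentiation in $t$ and $s$.

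\textbf{Second computation (Maurer--Cartan on the target).} Use naturality ${\rm d}(\eta^*\theta) = \eta^*({\rm d}\theta)$. For left-invariant $X,Y$, the standard formula gives ${\rm d}\theta(X,Y) = X\theta(Y) - Y\theta(X) - \theta([X,Y]) = -\theta([X,Y])$, since $\theta(X)$ and $\theta(Y)$ are constant. By left-invariance of ${\rm d}\theta$, this yields
\begin{equation*}
{\rm d}\theta_g(a,b) = -\theta([g^{-1}a,g^{-1}b])
\end{equation*}
for arbitrary $a,b\in T_gG$. Hence
\begin{equation*}
\eta^*({\rm d}\theta)(\partial_t,\partial_s) = {\rm d}\theta_\eta(\partial_t\eta,\partial_s\eta) = -\theta([u,v]).
\end{equation*}

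\textbf{Conclusion.} Equating the two expressions gives $\theta(\partial_tv - \partial_su) = -\theta([u,v])$ for every $\theta\in\mathfrak{g}^*$. Therefore $\partial_su - \partial_tv = [u,v] = {\rm ad}_uv$.

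The main obstacle is bookkeeping: the sign convention in ${\rm d}\theta$ and the bracket convention for left-invariant fields must be consistent with the paper's ${\rm ad}$ and with $\sigma(u,v) = \alpha([u,v])$ for $\sigma=-{\rm d}\alpha$. These are in fact the same conventions, so the check is that the displayed formula ${\rm d}\theta(X,Y)=-\theta([X,Y])$ matches. As a sanity check, I would verify the identity for a matrix group, where $u = \eta^{-1}\partial_t\eta$ and $v=\eta^{-1}\partial_s\eta$. There one computes directly
\begin{equation*}
\partial_su - \partial_tv = -\eta^{-1}\partial_s\eta\,\eta^{-1}\partial_t\eta + \eta^{-1}\partial_t\eta\,\eta^{-1}\partial_s\eta = uv - vu,
\end{equation*}
because the mixed second partials cancel. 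This confirms the sign.
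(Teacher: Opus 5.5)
Your proof is correct. It rests on the same fact as the paper's, namely the Maurer--Cartan structure equation evaluated on $\partial_t\eta,\partial_s\eta$, but it converts ${\rm d}$ into $t$- and $s$-derivatives by a different mechanism.

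The paper works with the $\mathfrak{g}$-valued form $\Theta$ and cites ${\rm d}\Theta(X,Y)+[\Theta(X),\Theta(Y)]=0$. It then introduces an auxiliary torsion-free connection on $G$, so that ${\rm d}\Theta(X,Y)=(\nabla_X\Theta)(Y)-(\nabla_Y\Theta)(X)$. The symmetry $\nabla_{\partial_s\eta}\partial_t\eta=\nabla_{\partial_t\eta}\partial_s\eta$ cancels the second-order terms. This implicitly uses covariant differentiation along the map $\eta$.

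You instead pair with scalar left-invariant forms $\theta\in\mathfrak{g}^*$ and pull back to the parameter domain. There you use naturality ${\rm d}(\eta^*\theta)=\eta^*{\rm d}\theta$ together with $[\partial_t,\partial_s]=0$. You also derive the componentwise structure equation ${\rm d}\theta(X,Y)=-\theta([X,Y])$ directly from the invariant formula for ${\rm d}$. Your route is more self-contained: it needs no connection, no covariant derivatives along maps, and no citation. The paper's version is more compact in vector-valued notation.

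Your concern about conventions is partly unnecessary. Any normalization constant in the definition of ${\rm d}$ appears identically in both of your computations and cancels. Only the identification of the bracket on $\mathfrak{g}$ with the bracket of left-invariant vector fields matters, and your matrix-group check confirms that.
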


\begin{proof}
  Consider the Maurer-Cartan form $\Theta\in \Omega^1(G,\mathfrak{g})$ of $G$, defined by $\Theta_g(v) =g^{-1}v$, and its well-known structure equation
  \begin{equation}\label{eq:MC-structure}
    {\rm d}\Theta(X,Y) + [\Theta(X),\Theta(Y)] = 0,\quad\mbox{for all }X,Y\in\mathfrak{X}(G),
  \end{equation}cf. \cite[Proposition 7.3]{Helgason}, which corresponds to flatness of $\Theta$ as a connection $1$-form on the principal $G$-bundle $G\to \{e\}$. Fix also any torsionfree connection $\nabla$ on $G$, so that the formula
  \begin{equation}\label{eq:d-torsionfree}
    {\rm d}\Theta(X,Y) = (\nabla_X\Theta)(Y)-(\nabla_Y\Theta)(X)
  \end{equation}holds. Finally, noting that $u = \Theta(\partial_t\eta)$ and $v = \Theta(\partial_s\eta)$, we compute:
  \begin{equation}\label{eqn:zero-curvature-computation}
    \begin{split}
      \partial_su - \partial_tv &= \partial_s (\Theta(\partial_t\eta)) - \partial_t(\Theta(\partial_s\eta)) \\ &= (\nabla_{\partial_s\eta}\Theta)(\partial_t\eta) + \Theta(\nabla_{\partial_s\eta}\partial_t\eta) -(\nabla_{\partial_t\eta}\Theta)(\partial_s\eta) - \Theta(\nabla_{\partial_t\eta}\partial_s\eta) \\ &= {\rm d}\Theta(\partial_s\eta,\partial_t\eta) + \Theta(\nabla_{\partial_s\eta}\partial_t\eta - \nabla_{\partial_t\eta}\partial_s\eta) \\ &= -[\Theta(\partial_s\eta),\Theta(\partial_t\eta)] + 0 \\ &= -[v,u] \\ &= {\rm ad}_uv, 
    \end{split}
  \end{equation}as required; in the third and fourth lines of \eqref{eqn:zero-curvature-computation} we use \eqref{eq:MC-structure} and \eqref{eq:d-torsionfree}, respectively, as well as the equality $\nabla_{\partial_s\eta}\partial_t\eta=\nabla_{\partial_t\eta}\partial_s\eta$, valid as $\nabla$ is torsionfree.
\end{proof}

The second ingredient is a variational characterization of magnetic geodesics for exact magnetic systems. Namely, if $(\g,-{\rm d}\alpha)$ is an exact magnetic system on a smooth manifold $M$, then a smooth curve $\gamma\colon [a,b]\to M$ is a $(\g,-{\rm d}\alpha)$-geodesic if and only if it is a critical point (under variations with fixed endpoints) of the \emph{Landau-Hall functional}
\begin{equation}\label{eqn:Landau-functional}
  E(\gamma) = \int_a^b\left( \frac{1}{2}\|\dot{\gamma}(t)\|^2 + \alpha_{\gamma(t)}(\dot{\gamma}(t))\right)\,{\rm d}t,
\end{equation}cf. \cite[Section I]{BRCF_2005} (specifically, the Euler-Lagrange equation of \eqref{eqn:Landau-functional} is precisely \eqref{eqn:Landau-Hall}). 

Below, suppose that $(\g,-{\rm d}\alpha)$ is a left-invariant magnetic system on the Lie group $G$, with the potential $1$-form $\alpha$ also left-invariant.

\begin{prop}\label{prop:mag-EA}
  A smooth curve $\gamma\colon [a,b] \to G$ is a $(\g,-{\rm d}\alpha)$-geodesic if and only if $x(t) = \gamma(t)^{-1}\dot{\gamma}(t)$ satisfies the \emph{magnetic Euler-Arnold equation}
  \begin{equation}\label{eqn:mag-EA}
    \dot{x}(t) = {\rm ad}_{x(t)}^\dagger(x(t) + \alpha^\sharp),
  \end{equation}
  where $\alpha^\sharp \in \mathfrak{g}$ is the vector $\g$-dual to $\alpha$. In particular, $(\g,-{\rm d}\alpha)$-geodesics are in one-to-one correspondence with integral curves of the \emph{magnetic Euler-Arnold vector field} $Z\in \mathfrak{X(g)}$, defined by $Z_x = {\rm ad}_x^\dagger(x+\alpha^\sharp)$ for all $x\in\mathfrak{g}$.
\end{prop}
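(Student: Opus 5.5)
We will follow the variational route of \cite[Proposition 2.1]{brigant2025conjugatepointsliegroups}, relying on the zero-curvature equation and on the characterization of $(\g,-{\rm d}\alpha)$-geodesics as critical points of the Landau-Hall functional \eqref{eqn:Landau-functional}. The first step is to rewrite that functional in left-trivialized form. Both $\g$ and $\alpha$ are left-invariant, so $\|\dot\gamma(t)\|^2 = \langle x(t),x(t)\rangle$ and $\alpha_{\gamma(t)}(\dot\gamma(t)) = \alpha(x(t)) = \langle \alpha^\sharp, x(t)\rangle$. This gives
\[
E(\gamma) = \int_a^b \Big(\tfrac12\langle x,x\rangle + \langle \alpha^\sharp, x\rangle\Big)\,{\rm d}t = \int_a^b \tfrac12 \langle x, x+2\alpha^\sharp\rangle\,{\rm d}t .
\]

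Next, take a variation $\eta$ of $\gamma$ with fixed endpoints, and let $u,v$ be as in \eqref{eqn:translated-uv}. Then $u(\cdot,0)=x$, and $v(a,s)=v(b,s)=0$. Differentiating under the integral sign at $s=0$ gives
\[
\frac{{\rm d}}{{\rm d}s}\Big|_{s=0}E(\eta(\cdot,s)) = \int_a^b \langle \partial_s u, x+\alpha^\sharp\rangle\,{\rm d}t .
\]
The zero-curvature equation lets us replace $\partial_s u$ by $\partial_t v + {\rm ad}_x v$. Integrating by parts, with the boundary terms vanishing because $v$ vanishes at $t=a,b$, and using the definition of ${\rm ad}^\dagger$, this becomes
\[
\int_a^b \big\langle v, -\dot x + {\rm ad}_x^\dagger(x+\alpha^\sharp)\big\rangle\,{\rm d}t .
\]
Here we used that $\alpha^\sharp$ is constant, so that $\partial_t(x+\alpha^\sharp) = \dot x$.

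The step requiring the most care is the converse direction of the fundamental lemma of calculus of variations. Every smooth $w\colon[a,b]\to\mathfrak{g}$ with $w(a)=w(b)=0$ must arise as $v(\cdot,0)$ for some fixed-endpoint variation. To obtain this, take $\eta(t,s)=\gamma(t)\exp(s\,w(t))$. Then $\eta^{-1}\partial_s\eta|_{s=0} = w$, and the endpoints are fixed. With this in hand, $\gamma$ is critical if and only if \eqref{eqn:mag-EA} holds, and the equivalence with the $(\g,-{\rm d}\alpha)$-geodesic condition follows from the variational characterization quoted above. To get a genuine if-and-only-if, that characterization should be applied on every compact subinterval.

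For the final claim, note that left translation identifies curves $\gamma$ with pairs $(\gamma(a), x)$, since $\gamma$ is recovered from $x$ by solving $\dot\gamma=\gamma x$. Consequently, $(\g,-{\rm d}\alpha)$-geodesics through a fixed point correspond bijectively to integral curves of $Z$.
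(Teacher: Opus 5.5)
Your proof is correct and follows the paper's argument essentially step for step. Like the paper, you left-trivialize the Landau--Hall functional, differentiate along a fixed-endpoint variation, substitute $\partial_s u = \partial_t v + {\rm ad}_u v$ from the zero-curvature equation, and integrate by parts to reach $\int_a^b \langle v, -\dot{x} + {\rm ad}_x^\dagger(x+\alpha^\sharp)\rangle\,{\rm d}t$. Two of your additions make precise steps the paper leaves implicit: the variation $\eta(t,s)=\gamma(t)\exp(s\,w(t))$, which shows every admissible $w$ arises, and your remark that the correspondence with integral curves of $Z$ holds for geodesics with a fixed initial point.
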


\begin{proof}
  Let $\eta$ be a variation of $\gamma$ with fixed endpoints, and consider the translated partial derivatives $u$ and $v$ of $\eta$ as in \eqref{eqn:translated-uv}. By left-invariance of $\g$ and $\alpha$, we may rewrite \eqref{eqn:Landau-functional} as
  \begin{equation}\label{eqn:LH-at-identity}
    E(\eta(\cdot,s)) = \int_a^b \left(\frac{1}{2}\|u(t,s)\|^2 + \alpha_e(u(t,s))\right)\,{\rm d}t.
  \end{equation}Differentiating \eqref{eqn:LH-at-identity} with respect to $s$ (omitting arguments and setting $\g=\langle\cdot,\cdot\rangle$), we obtain
\begin{equation}
  \begin{split}
    \partial_s E(\eta) &= \int_a^b \langle \partial_su, u\rangle + \alpha(\partial_su)\,{\rm d}t \\ &\stackrel{(\ast)}{=} \int_a^b \langle {\rm ad}_uv + \partial_tv, u\rangle + \alpha({\rm ad}_uv + \partial_tv)\,{\rm d}t \\ &\stackrel{(\ast\ast)}{=} \int_a^b \langle v, {\rm ad}_u^\dagger u - \partial_tu\rangle + \langle \alpha^\sharp, {\rm ad}_uv\rangle \,{\rm d}t \\ &= \int_a^b \langle v, {\rm ad}_u^\dagger u - \partial_tu + {\rm ad}_u^\dagger (\alpha^\sharp)\rangle \,{\rm d}t.
  \end{split}
\end{equation}In $(\ast)$ we have used the zero-curvature equation, and in $(\ast\ast)$ integration by parts (noting that all boundary terms vanish as $\eta$ has fixed endpoints) together with $\alpha = \langle \alpha^\sharp,\cdot\rangle$. Thus $\gamma$ is a critical point of $E$ if and only if ${\rm ad}_x^\dagger x - \dot{x} + {\rm ad}_x^\dagger(\alpha^\sharp) = 0$, which is evidently equivalent to \eqref{eqn:mag-EA}.
\end{proof}

\begin{obs}
 The magnetic Euler-Arnold vector field does not depend on the choice of $\alpha$, in the following sense: if $\beta$ is another left-invariant $1$-form such that ${\rm d}\alpha = {\rm d}\beta$, then ${\rm ad}_x^\dagger(x+\alpha^\sharp) = {\rm ad}_x^\dagger(x+\beta^\sharp)$ for all $x\in \mathfrak{g}$. Indeed, ${\rm d}(\alpha-\beta) = 0$ means that $(\alpha-\beta)([x,y]) = 0$, and so $\langle {\rm ad}_x^\dagger(\alpha^\sharp-\beta^\sharp), y\rangle = \langle \alpha^\sharp - \beta^\sharp, [x,y]\rangle =0$ for all $x,y\in \mathfrak{g}$. Hence ${\rm ad}_x^\dagger(\alpha^\sharp-\beta^\sharp) = 0$, as claimed.
\end{obs}

\begin{obs}
  Every left-invariant magnetic system is complete. Indeed, $(\g,\sigma)$ is complete whenever $\g$ is complete \cite[Corollary 2.4]{BRCF_2005} and, more generally, every homogeneous Riemannian metric is complete \cite[Corollary 4.4]{KN1}.
\end{obs}

\bibliography{magnetic_refs}{}

\begin{thebibliography}{10}

\bibitem{AMMP_2017}
A.~Abbondandolo, L.~Macarini, M.~Mazzucchelli, and G.~P. Paternain.
\newblock Infinitely many periodic orbits of exact magnetic flows on surfaces
  for almost every subcritical energy level.
\newblock {\em J. Eur. Math. Soc. (JEMS)}, 19(2):551--579, 2017.

\bibitem{ABM_2025}
P.~Albers, G.~Benedetti, and L.~Maier.
\newblock The {H}opf-{R}inow theorem and the {M}a\~n\'e{} critical value for
  magnetic geodesics on odd-dimensional spheres.
\newblock {\em J. Geom. Phys.}, 214:Paper No. 105521, 2025.

\bibitem{Arnold_1966}
V.~I. Arnol'd.
\newblock Sur la g\'eom\'etrie diff\'erentielle des groupes de {L}ie de
  dimension infinie et ses applications \`a{} l'hydrodynamique des fluides
  parfaits.
\newblock {\em Ann. Inst. Fourier (Grenoble)}, 16:319--361, 1966.

\bibitem{Arnold_1989}
V.~I. Arnol'd.
\newblock {\em Mathematical methods of classical mechanics}, volume~60 of {\em
  Graduate Texts in Mathematics}.
\newblock Springer-Verlag, New York, second edition, 1989.
\newblock Translated from the Russian by K. Vogtmann and A. Weinstein.

\bibitem{Arvanitoyeorgos}
A.~Arvanitoyeorgos.
\newblock {\em An introduction to {L}ie groups and the geometry of homogeneous
  spaces}, volume~22 of {\em Student Mathematical Library}.
\newblock American Mathematical Society, Providence, RI, 2003.
\newblock Translated from the 1999 Greek original and revised by the author.

\bibitem{AB_2021}
L.~Asselle and G.~Benedetti.
\newblock Integrable magnetic flows on the two-torus: {Z}oll examples and
  systolic inequalities.
\newblock {\em J. Geom. Anal.}, 31(3):2924--2940, 2021.

\bibitem{Assenza_2024}
V.~Assenza.
\newblock Magnetic curvature and existence of a closed magnetic geodesic on low
  energy levels.
\newblock {\em Int. Math. Res. Not. IMRN}, (21):13586--13610, 2024.

\bibitem{mmls}
V.~Assenza, J.~De~Simoi, J.~Marshall~Reber, and I.~Terek.
\newblock Marked length spectrum rigidity for {A}nosov magnetic surfaces.
\newblock {\em Adv. Math.}, 500:Paper No. 111091, 2026.

\bibitem{ART_2024}
V.~Assenza, J.~Marshall~Reber, and I.~Terek.
\newblock Magnetic flatness and {E}. {H}opf's theorem for magnetic systems.
\newblock {\em Comm. Math. Phys.}, 406(2):Paper No. 24, 20, 2025.

\bibitem{AZ1}
R.~Azencott and E.~N. Wilson.
\newblock Homogeneous manifolds with negative curvature. {I}.
\newblock {\em Trans. Amer. Math. Soc.}, 215:323--362, 1976.

\bibitem{AZ2}
R.~Azencott and E.~N. Wilson.
\newblock Homogeneous manifolds with negative curvature. {II}.
\newblock {\em Mem. Amer. Math. Soc.}, 8(178):iii+102, 1976.

\bibitem{BT98}
A.~Bahri and I.~A. Taimanov.
\newblock Periodic orbits in magnetic fields and {R}icci curvature of
  {L}agrangian systems.
\newblock {\em Trans. Amer. Math. Soc.}, 350(7):2697--2717, 1998.

\bibitem{BRCF_2005}
M.~Barros, A.~Romero, J.~L. Cabrerizo, and M.~Fern\'andez.
\newblock The {G}auss-{L}andau-{H}all problem on {R}iemannian surfaces.
\newblock {\em J. Math. Phys.}, 46(11):112905, 15, 2005.

\bibitem{beaufort2026tensortomographyframeflow}
L.-B. Beaufort.
\newblock Tensor tomography and frame flow ergodicity for magnetic flows in
  higher dimensions, 2026.
\newblock Preprint available at {\url{https://arxiv.org/abs/2604.12495}}.

\bibitem{BK_2022}
G.~Benedetti and J.~Kang.
\newblock On a systolic inequality for closed magnetic geodesics on surfaces.
\newblock {\em J. Symplectic Geom.}, 20(1):99--134, 2022.

\bibitem{Besse}
A.~L. Besse.
\newblock {\em Einstein manifolds}, volume~10 of {\em Ergebnisse der Mathematik
  und ihrer Grenzgebiete (3) [Results in Mathematics and Related Areas (3)]}.
\newblock Springer-Verlag, Berlin, 1987.

\bibitem{BJ_2008}
A.~V. Bolsinov and B.~Jovanovi\'c.
\newblock Magnetic flows on homogeneous spaces.
\newblock {\em Comment. Math. Helv.}, 83(3):679--700, 2008.

\bibitem{brigant2025conjugatepointsliegroups}
A.~Le Brigant, L.~Lichtenfelz, and S.~C. Preston.
\newblock Conjugate points on {L}ie groups with left-invariant metrics, 2025.
\newblock Preprint available at {\url{https://arxiv.org/abs/2408.03854}}.

\bibitem{Burns-Paternain_2002}
K.~Burns and G.~P. Paternain.
\newblock Anosov magnetic flows, critical values and topological entropy.
\newblock {\em Nonlinearity}, 15(2):281--314, 2002.

\bibitem{Cheeger-Ebin}
J.~Cheeger and D.~G. Ebin.
\newblock {\em Comparison theorems in {R}iemannian geometry}, volume Vol. 9 of
  {\em North-Holland Mathematical Library}.
\newblock North-Holland Publishing Co., Amsterdam-Oxford; American Elsevier
  Publishing Co., Inc., New York, 1975.

\bibitem{Paternain_2007}
N.~S. Dairbekov, G.~P. Paternain, P.~Stefanov, and G.~Uhlmann.
\newblock The boundary rigidity problem in the presence of a magnetic field.
\newblock {\em Adv. Math.}, 216(2):535--609, 2007.

\bibitem{Dotti_1982}
I.~Dotti~Miatello.
\newblock Ricci curvature of left invariant metrics on solvable unimodular
  {L}ie groups.
\newblock {\em Math. Z.}, 180(2):257--263, 1982.

\bibitem{DGJ_2025}
V.~Dragovi\'c, B.~Gaji\'c, and B.~Jovanovi\'c.
\newblock Integrability of homogeneous exact magnetic flows on spheres.
\newblock {\em Regul. Chaotic Dyn.}, 30(4):582--597, 2025.

\bibitem{Efimov_2005}
D.~I. Efimov.
\newblock The magnetic geodesic flow on a homogeneous symplectic manifold.
\newblock {\em Sibirsk. Mat. Zh.}, 46(1):106--118, 2005.

\bibitem{EGM_2021}
J.~Epstein, R.~Gornet, and M.~B. Mast.
\newblock Periodic magnetic geodesics on {H}eisenberg manifolds.
\newblock {\em Ann. Global Anal. Geom.}, 60(3):647--685, 2021.

\bibitem{Gouda-97}
N.~Gouda.
\newblock Magnetic flows of {A}nosov type.
\newblock {\em Tohoku Math. J. (2)}, 49(2):165--183, 1997.

\bibitem{Heintze_1974}
E.~Heintze.
\newblock On homogeneous manifolds of negative curvature.
\newblock {\em Math. Ann.}, 211:23--34, 1974.

\bibitem{Helgason}
S.~Helgason.
\newblock {\em Differential geometry, {L}ie groups, and symmetric spaces},
  volume~34 of {\em Graduate Studies in Mathematics}.
\newblock American Mathematical Society, Providence, RI, 2001.
\newblock Corrected reprint of the 1978 original.

\bibitem{Kobayashi_1962}
S.~Kobayashi.
\newblock Homogeneous {R}iemannian manifolds of negative curvature.
\newblock {\em Tohoku Math. J. (2)}, 14:413--415, 1962.

\bibitem{KN2}
S.~Kobayashi and K.~Nomizu.
\newblock {\em Foundations of differential geometry. {V}ol. {II}}, volume Vol.
  II of {\em Interscience Tracts in Pure and Applied Mathematics, No. 15}.
\newblock Interscience Publishers John Wiley \& Sons, Inc., New
  York-London-Sydney, 1969.

\bibitem{KN1}
S.~Kobayashi and K.~Nomizu.
\newblock {\em Foundations of differential geometry. {V}ol. {I}}.
\newblock Wiley Classics Library. John Wiley \& Sons, Inc., New York, 1996.
\newblock Reprint of the 1963 original, A Wiley-Interscience Publication.

\bibitem{MSY_2008}
A.~A. Magaz\"ev, I.~V. Shirokov, and Y.~A. Yurevich.
\newblock Integrable magnetic geodesic flows on {L}ie groups.
\newblock {\em Teoret. Mat. Fiz.}, 156(2):189--206, 2008.

\bibitem{Maier_2026}
L.~Maier.
\newblock On geometric hydrodynamics and infinite-dimensional magnetic systems.
\newblock {\em Nonlinearity}, 39(5):Paper No. 055002, 16, 2026.

\bibitem{SIGMA-26}
J.~Marshall~Reber and I.~Terek.
\newblock Finiteness of totally magnetic hypersurfaces.
\newblock {\em SIGMA Symmetry Integrability Geom. Methods Appl.}, 22:Paper No.
  043, 15, 2026.

\bibitem{Merry_2010}
W.~J. Merry.
\newblock Closed orbits of a charge in a weakly exact magnetic field.
\newblock {\em Pac. J. Math.}, 247(1):189--212, 2010.

\bibitem{Milnor_1976}
J.~Milnor.
\newblock Curvatures of left invariant metrics on {L}ie groups.
\newblock {\em Advances in Math.}, 21(3):293--329, 1976.

\bibitem{MN_2022}
M.~I. Munteanu and A.~I. Nistor.
\newblock Magnetic curves in the generalized {H}eisenberg group.
\newblock {\em Nonlinear Anal.}, 214:Paper No. 112571, 18, 2022.

\bibitem{Ovando-Subils_2023}
G.~P. Ovando and M.~Subils.
\newblock Magnetic trajectories on 2-step nilmanifolds.
\newblock {\em J. Geom. Anal.}, 33(6):Paper No. 186, 28, 2023.

\bibitem{AGAG-26}
I.~Terek.
\newblock The submanifold compatibility equations in magnetic geometry.
\newblock {\em Ann. Global Anal. Geom.}, 69(2):Paper No. 4, 15, 2026.

\bibitem{Turhan_2020}
T.~Turhan.
\newblock Magnetic trajectories in three-dimensional {L}ie groups.
\newblock {\em Math. Methods Appl. Sci.}, 43(5):2747--2758, 2020.

\bibitem{Wojtkowski_2000}
M.~P. Wojtkowski.
\newblock Magnetic flows and {G}aussian thermostats on manifolds of negative
  curvature.
\newblock {\em Fund. Math.}, 163(2):177--191, 2000.

\bibitem{Wolf_1964}
J.~A. Wolf.
\newblock Homogeneity and bounded isometries in manifolds of negative
  curvature.
\newblock {\em Illinois J. Math.}, 8:14--18, 1964.

\end{thebibliography}
\bibliographystyle{plain}

\end{document}